\documentclass[12pt]{amsart}

\usepackage{graphicx}

\usepackage{tikz-cd}

\usepackage{pgf}


%
%
%
%


 \usepackage{url}	
 \allowdisplaybreaks 

\usepackage{tikz-cd}
\usepackage{pgf}

\newtheorem{teo}{Theorem}[section]
\newtheorem{theorem}[teo]{Theorem}
\newtheorem{cor}[teo]{Corollary}
\newtheorem{corollary}[teo]{Corollary}

\newtheorem{lema}[teo]{Lemma}
\newtheorem{lemma}[teo]{Lemma}
\newtheorem{prop}[teo]{Proposition}
\newtheorem{proposition}[teo]{Proposition}

\theoremstyle{definition}
\newtheorem{definition}[teo]{Definition}
\newtheorem{defi}[teo]{Definition}
\newtheorem{example}[teo]{Example}
\newtheorem{ex}[teo]{Example}

\theoremstyle{remark}
\newtheorem{remark}{Remark}

\numberwithin{figure}{section}




\newcommand{\C}{\mathcal{C}}
\newcommand{\I}{\mathcal{I}}
\newcommand{\Dc}{\mathcal{D}}
\newcommand{\D}{\mathrm{D}}
\newcommand{\E}{\mathcal{E}}
\newcommand{\U}{\mathcal{U}}

\newcommand{\cat}{\mathop{\mathrm{cat}}}
\newcommand{\ccat}{\mathop{\mathrm{ccat}}}
\newcommand{\wccat}{\mathop{\mathrm{wccat}}}

\newcommand{\cTC}{\mathop{\mathrm{cTC}}}
\newcommand{\sTC}{\mathop{\mathrm{sTC}}}
\newcommand{\sD}{\mathop{\mathrm{sD}}}
\newcommand{\scat}{\mathop{\mathrm{scat}}}

\newcommand{\cD}{\mathrm{cD}}
\newcommand{\wcD}{\mathrm{wcD}}

\newcommand{\sd}{\mathrm{sd}}
\newcommand{\ob}{\mathrm{Ob}}
\newcommand{\arr}{\mathrm{Arr}}
\newcommand{\B}{\mathrm{B}}
\newcommand{\op}{\mathrm{op}}

\newcommand{\co}{\colon}


\begin{document}
\title{Homotopic distance between functors}
\thanks{The first author was partially supported by MINECO-FEDER research project MTM2016-78647-P. The second author was partly supported by Ministerio de Ciencia, Innovaci\'on y Universidades,  grant FPU17/03443.}
\author[]{E. Mac\'{i}as-Virg\'os}
\address[E. Mac\'{i}as-Virg\'os, D. Mosquera-Lois] {\newline \indent Institute of Mathematics, University of Santiago de Compostela, Spain.}
\email{quique.macias@usc.es} 
\email{david.mosquera.lois@usc.es}

\author[]{D. Mosquera-Lois}

\begin{abstract} We introduce a notion of {\em categorical homotopic distance between functors} by adapting the notion of homotopic distance in topological spaces, recently defined by the authors to the context of small categories. Moreover, this notion generalizes the work on categorical LS-category of small categories by Tanaka. 
\end{abstract}

\subjclass[2010]{
Primary: 55U10 
Secondary: 55M30
}

\maketitle

\section{Introduction}\label{INTRO}

Recently, some topological concepts were extended to small categories, this is the case of the Euler characteristic by Leinster \cite{Leinster_2,Leinster} and both a notion of Lusternik-Schnirelman category \cite{Tanaka3} and a theory of Euler Calculus in the context of small categories  by Tanaka \cite{Tanaka_integration,Tanaka_networks}. Moreover, the authors have generalized both the LS-category and the Topological Complexity by means of a new notion of homotopic distance between continuous maps \cite{QuiDa}. The purpose of this work is to adapt the  notion of homotopic distance  to the context of functors between small categories. Furthermore, this ``homotopic distance between functors'' generalizes the categorical LS-category introduced by Tanaka \cite{Tanaka3} and allows us to define a notion of ``topological complexity for categories'', which may be thought as an adaptation of the Topological complexity introduced by Farber \cite{Farber} to the context of categories. 

The organization of the paper is as follows:

 In Section \ref{sec:cat_dis_between_funct} we recall the well known definitions of homotopy and weak homotopy for functors between categories   and then we introduce two corresponding definitions of categorical  distance, which we call  ``categorical homotopic distance'' and ``weak categorical homotopic distance'' between functors, respectively. 
  
 Section \ref{sec:examples_and_computations} is devoted to present particular cases of categorical homotopic distance such as the categorical LS-category introduced by Tanaka \cite{Tanaka3} and a new notion of ``topological complexity'' for categories. 
 
 In Section \ref{sec:properties} we prove several properties of the categorical homotopic distance such as its behavior under compositions and products and its homotopical invariance. Moreover, we prove that the homotopic distance between functors is bounded above by the category of the domain. Afterwards, we relate the two  notions of homotopic distance between the functors $F,G$ to the homotopic distance of the  continuous maps
 $\B F,\B G$ associated by the classifying space functor. 
 
 In Section \ref{sec:Posets} we restrict our attention to the setting of posets, when seen as small categories. This setting is important for several reasons: first, stronger statements can be made; second, their appearance in several applications \cite{Tanaka_networks}; third, the fact that posets and small categories are strongly related by subdivision functors \cite{Hoyo}.

\subsection*{Acknowledgements} The second author thanks Victor Carmona S\'anchez for enlightening conversations and discussions.

\section{Categorical distance between functors}\label{sec:cat_dis_between_funct} 
	
In this section we recall the notions of homotopy and weak homotopy between functors. Then we introduce the (weak) categorical homotopic distance between functors. For more details on some classical definitions and constructions we refer the reader  to \cite{LeeHomot,LeeHomot2,May_simplicial,Minian,Quillen,Segal}.

\subsection{Homotopies between functors}
	
	Given a small category $\mathcal{C}$, we denote by $\ob(\C)$ its set of objects, by $\arr(\C)$ its set of arrows and by $\C(x,y)$ the set of arrows between the objects $x$ and $y$. 

	All categories will be assumed to be small and all functors will be assumed to be covariant unless stated otherwise. We begin by introducing the notion of homotopy between functors \cite{LeeHomot,LeeHomot2}.
	
	\begin{definition}\label{def:interval_category}
	The {\em interval category $\mathcal{I}_m$} of length $m\geq 0$ consists of $m+1$ objects with zigzag arrrows formed of \[0 \rightarrow 1 \leftarrow 2 \rightarrow \cdots \rightarrow (\leftarrow) m.\]
	Alternatively, the interval category $\mathcal{I}_m$ can be defined in the following way: the objects of $\mathcal{I}_m$ are the non-negative integers $0, 1, \dots, m$ and the arrows, other than the identities, are defined as follows. Given two distinct objects $r$ and $s$ in $\mathcal{I}_m$, there is exactly one arrow from $r$ to $s$ if $r$ is even and $s = r-1$ or $s = r + 1$, and no arrows otherwise.
	\end{definition}
	
	Given two small categories $\C$ and $\Dc$ we denote its product by $\C \times \Dc$. Recall that the objects of  $\C \times \Dc$ are pairs of objects in $\C$ and objects in $\Dc$, and its arrows are products of arrows in  $\C$ and arrows in $\Dc$.
	
	\begin{definition}
		Let $F,G\colon \C\to \Dc$ be two functors between small categories. We say that $F$  and $G$ are {\em homotopic} if there exists a functor $H\co \C \times \mathcal{I}_m \rightarrow \D$, called a homotopy (with length $m$), such that $H_0=F$ and $H_m=G$, for some $m\geq 0$. 
	\end{definition}
	
	Alternatively, the notion of homotopy between functors can be defined as follows. Both definitions are equivalent.  
	
	\begin{definition}
		Let $F,G\colon \C\to \Dc$ be two functors between small categories. We say that $F$  and $G$ are {\em homotopic}, $F \simeq G$, if there is a finite sequence of functors $F_0,\dots,F_m\colon \C\to \Dc$, with $F_0=F$ and $F_m=G$, such that for each $i\in \{0,\ldots,m-1\}$ there is a natural transformation between $F_i$ and $F_{i+1}$ or between $F_{i+1}$ and $F_i$. 
	\end{definition}
	
		Homotopies can be concatenated \cite{Tanaka3} and therefore, the homotopy relation between functors defined above is an equivalence relation. It also holds that the the relation behaves well with respect to compositions, i.e., if $F\simeq F'$ and $G \simeq G'$, then  $F \circ G \simeq F' \circ G'$ whenever $F \circ F'$ and $G \circ G'$ make sense.

	In order to state the next definition we briefly recall the definition of the classifying space functor $\B$  from small categories to topological spaces. 		 
	Given the small category $\C$, its {\em nerve $\mathrm{N}\C$} is a simplicial set whose $m$-simplices are 
	composable $m$-tuples of arrows in $\C$: $$c_0\xrightarrow{\alpha_1} \cdots \xrightarrow{\alpha_m} c_m.$$
	The face maps are obtained by composing or deleting arrows and the degenerate maps are obtained by inserting identities. A $m$-simplex of $\mathrm{N}\C$ is called {\em non-degenerate} if it includes no identity.  Given a functor $F\colon \C \to \Dc$ between small categories, we define $\mathrm{N} F\colon \mathrm{N}\C \to \mathrm{N}\Dc$ as follows: if $c_0\xrightarrow{\alpha_1} \cdots \xrightarrow{\alpha_m} c_m$ is a  $m$-simplex in $\mathrm{N}\C$, then 
	$$\mathrm{N} F(c_0\xrightarrow{\alpha_1} \cdots \xrightarrow{\alpha_m} c_m)=F(c_0)\xrightarrow{F(\alpha_1)} \cdots \xrightarrow{F(\alpha_m)} F(c_m).$$
	 The {\em classifying space $\B \C$} is then the geometric realization $\vert \mathrm{N}\C \vert$ of the  simplicial set $\mathrm{N}\C$. Moreover, $\B \C$  is a CW-complex with one $m$-cell for each non-degenerate $m$-simplex of $\mathrm{N}\C$. This construction is functorial \cite{May_simplicial, Quillen}, because  given a map  $\phi \colon K \to L$ between simplicial sets,  its geometric realization is a continuous map between topological spaces $\vert \phi \vert \colon \vert K\vert \to \vert L\vert$.  The classifying space functor is defined as the composition of the nerve functor with the geometric realization functor.

	\begin{definition}
	Let $F,G\colon \C\to \Dc$ be two functors between small categories. We say that $F$  and $G$ are {\em weak homotopic}, denoted $F \simeq_{w} G$ if the maps $\B F,\B G \colon\B \C\to \B \Dc$ are homotopic.
	\end{definition}

	\begin{definition}
	A functor $F\colon \C\to \Dc$ is said to be a  homotopy equivalence (respectively a weak homotopy equivalence) if there exists another functor $G\co \Dc \to \C$ such that  $G\circ F\simeq 1_{\C}$ (respectively $G\circ F\simeq_w 1_{\C}$) and  $F\circ G\simeq 1_{\Dc}$ (respectively $F\circ G\simeq_w 1_{\Dc}$). Under these circumstances we say that the categories $\C$ and $\Dc$ are  homotopy equivalent (respectively weak homotopy equivalent).
	\end{definition}

	Recall that the classifying space functor preserves homotopies, that is, if two functors $F,G\colon \C\to \Dc$ are homotopic, then the induced maps $\B F,\B G\colon \B\C\to \B\Dc$ on the classifying spaces are also homotopic. Therefore, homotopy equivalence between categories implies weak homotopy equivalence. However, the converse does not hold as the following example given by Minian \cite{Minian} shows:
	
	\begin{example}\label{ex:poset_revirado_numeros_naturales}
	Consider a category $\mathcal{N}$ whose objects are the non-negative integers and the arrows, other than the identities, are defined as follows. If $r$ and $s$ are two distinct objects in $\mathcal{N}$, there is exactly one arrow from $r$ to $s$ if $r$ is even and $s = r-1$ or $s = r + 1$ and no arrows otherwise. Assume there is a functor $F\colon \mathcal{N} \to \mathcal{N}$ such that $F\simeq 1_{\mathcal{N}}$. We claim that there exists a non-negative integer $n_0$ such that $F(n)=n$, for all $n\geq n_0$, in particular $F$ is not constant and the category $\mathcal{N}$ is not contractible. 
	
	Let us prove the claim.  First, note that if there exists a natural transformation $G\Rightarrow 1_{\mathcal{N}}$ or $1_{\mathcal{N}}\Rightarrow G$, then $G$ fixes the odd numbers, that is, $G(n)=n$ for $n$ odd. As a consequence, it follows that $G(m)=m$ for every $m>0$. By repeating a similar argument it can be deduced that if there exists a natural transformation $G'\Rightarrow G$ or $G\Rightarrow G'$, then $G'$ fixes all natural numbers larger than one. Repeating this argument it follows that if $F\simeq 1_{\mathcal{N}}$, then  there exists a non negative integer $n_0$ such that $F(n)=n$ for all $n\geq n_0$ as we claimed. 
	
	However, the category $\mathcal{N}$ is weak contractible since $\B \mathcal{N}$ is homotopy equivalent to $[0,+\infty)$ and the map $\B \mathcal{N}\to \B (0)$ is a homotopy equivalence of topological spaces.
	\end{example}

	\begin{example}
		When the small categories $\C$ and $\Dc$ are partially ordered sets seen as finite topological spaces \cite{Barmak_book} (see Section \ref{sec:Posets}) and $F,G\colon \C\to \Dc$ are order preserving maps, then the notion of homotopy between functors is equivalent to the usual notion of homotopy in the context of topological spaces \cite{Raptis}. 
	\end{example}

We recall a notion of ``connectedness'' for categories \cite{Riehl_2}:

\begin{definition}\label{def:connectedness_for_categories}
A small category $\C$ is said to be {\em connected} if for any pair of objects $c,c'$ there is a finite sequence of zigzag arrows joining them: \[c=c_0 \rightarrow c_1 \leftarrow c_2 \rightarrow \cdots \rightarrow (\leftarrow) c_m=c'.\]
\end{definition}

Equivalently, a small category $\C$ is said to be connected if for any pair of objects $c,c'$ there is a functor $F$ from some interval category $\mathcal{I}_m$ to $\C$ such that $F(0)=c$ and $F(m)=c'$.

\begin{definition}
Given two small categories $\C$ and $\Dc$ it is said that a functor $d_0\colon \C\to \Dc$ is a {\em constant functor} onto the object $d_0$ of $\Dc$ if $d_0\colon \C\to \Dc$ takes every object of $\C$ to $d_0$ and every arrow to the identity arrow of $d_0$. 
\end{definition}

\begin{remark}
	Observe that a small category $\C$ is connected if and only if any pair of constant functors $c_0,c_1\colon \C\to \C$ onto objects $c_0$ and $c_1$ are homotopic.  
\end{remark}

From now on all categories are assumed to be connected.

\begin{example}
In the context of posets, Definition \ref{def:connectedness_for_categories} corresponds to the notion of order-connectedness (which is equivalent to topological connectedness for the associated finite spaces \cite{Barmak_book}). 
\end{example}

\begin{definition}
A small category $\C$ is said to be {\em contractible} if the identity functor is homotopic to a constant functor onto an object.
\end{definition}
	

We state a useful result.

\begin{proposition}\label{prop:adjoints_are_homotopy_equiv}
Suppose the functor $F\colon \C\to \Dc$ between small categories has a left or right adjoint $G\colon \Dc\to \C$. Then $F\colon \C\to \Dc$ is a homotopy equivalence. In particular, when $\C$ has an initial or terminal object, $\C$ is contractible.
\end{proposition}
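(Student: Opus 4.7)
The plan is to read off the homotopies from the unit and counit of the adjunction, which are precisely natural transformations and hence length-$1$ homotopies in the sense of the second definition of homotopy between functors.

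First I would recall that an adjunction $F \dashv G$ (say with $F$ left adjoint to $G$) provides a unit natural transformation $\eta \colon 1_{\C} \Rightarrow G \circ F$ and a counit natural transformation $\varepsilon \colon F \circ G \Rightarrow 1_{\Dc}$. The existence of a single natural transformation between two functors is, by the second formulation of homotopy (take $m=1$ with the sequence $F_0, F_1$), already a homotopy. Consequently $G \circ F \simeq 1_{\C}$ and $F \circ G \simeq 1_{\Dc}$, so $F$ is a homotopy equivalence with homotopy inverse $G$. The case where $G$ is a left adjoint to $F$ is identical, just swapping the roles of unit and counit. No subtle verification is needed, since the notion of homotopy used in this section already treats natural transformations as the elementary building blocks.

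For the \emph{in particular} clause, I would exhibit an adjunction with the terminal category $\mathbf{1}$ (the category with one object and only its identity). Suppose $t \in \ob(\C)$ is a terminal object. Let $\pi \colon \C \to \mathbf{1}$ be the unique functor and $i_t \colon \mathbf{1} \to \C$ the functor picking out $t$. The bijection
\[
\C(c, t) \cong \{\ast\} = \mathbf{1}(\pi(c), \ast),
\]
natural in $c$, together with triviality on the other side, shows that $i_t$ is right adjoint to $\pi$. By the first part of the proposition, $\pi$ is a homotopy equivalence, so $i_t \circ \pi \simeq 1_{\C}$. But $i_t \circ \pi \colon \C \to \C$ is precisely the constant functor onto $t$, which shows that $\C$ is contractible. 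The dual argument, using $i_s$ left adjoint to $\pi$, handles the case where $\C$ admits an initial object $s$.

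The only potential obstacle is making sure the first definition of homotopy (via the interval $\I_m$) is used consistently with the second (via sequences of natural transformations); however, this equivalence is asserted in the excerpt right after both definitions, so one can freely pass to the more convenient formulation. No case analysis on the direction of the adjunction is really needed, since unit and counit play symmetric roles.
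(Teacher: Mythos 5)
Your proof is correct and takes essentially the same route as the paper: the paper's proof simply observes that the unit $1_{\C}\Rightarrow G\circ F$ and counit $F\circ G\Rightarrow 1_{\Dc}$ are natural transformations, hence homotopies. You additionally spell out the \emph{in particular} clause via the adjunction between $\C$ and the terminal category, which the paper leaves implicit; that verification is also correct.
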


\begin{proof}
	If $G$ is a right adjoint to $F$, then there are two natural transformations $1_{\C} \Rightarrow G\circ F$ and $F\circ G \Rightarrow 1_{\Dc}$. Therefore $F$ is a homotopy equivalence.
\end{proof}

\subsection{Definition of categorical distance}

We begin by introducing a suitable notion of covers for categories in order to define a notion of categorical distance between functors. The idea for this approach comes from thinking on the associated cover of the classifying space in order cover the arrows of the category. It was introduced by Tanaka~\cite{Tanaka3}.

\begin{definition}
	A collection of subcategories $\{\U_{\lambda}\}_{\lambda\in \Lambda}$ of a category $\C$ is a {\em geometric cover of $\C$} if for every sequence of composable arrows $f_1,\ldots,f_n$ in $\C$, there exists an index $\lambda \in \Lambda$ such that every $f_i$ belongs to $\U_{\lambda}$.
\end{definition}

Recall from \cite{Tanaka3}:

\begin{prop}\label{prop:condition_being_geometric_cover}
	Let $\{\U_{\lambda}\}_{\lambda \in \Lambda}$ be a collection of subcategories of a category $\C$. This is a geometric cover if and only if the collection of subcomplexes $\{\B \U_{\lambda}\}_{\lambda \in \Lambda}$ covers $\B\C$.
\end{prop}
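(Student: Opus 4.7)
The plan is to exploit the CW structure of $\B\C$ described just before the statement: its $m$-cells correspond bijectively to non-degenerate $m$-simplices of $\mathrm{N}\C$, i.e.\ composable tuples of non-identity arrows. Since each subcategory inclusion $\U_\lambda\hookrightarrow \C$ induces an inclusion of simplicial sets $\mathrm{N}\U_\lambda\hookrightarrow \mathrm{N}\C$, and geometric realization carries this to a subcomplex inclusion $\B\U_\lambda\hookrightarrow \B\C$, the statement reduces to matching composable tuples in $\U_\lambda$ with cells of $\B\C$ contained in $\B\U_\lambda$.

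For the forward direction I would pick a point $x\in\B\C$ and use that it lies in the open cell of a unique non-degenerate simplex $\sigma=(c_0\xrightarrow{\alpha_1}\cdots\xrightarrow{\alpha_m}c_m)$. Applying the geometric cover hypothesis to the composable tuple $\alpha_1,\ldots,\alpha_m$ furnishes $\lambda$ with every $\alpha_i\in\U_\lambda$; since $\U_\lambda$ is a subcategory it also contains all $c_i$, so $\sigma\in\mathrm{N}\U_\lambda$ and $x\in \B\U_\lambda$.

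For the reverse direction I would take a composable sequence $f_1,\ldots,f_n$ in $\C$ and let $g_1,\ldots,g_k$ be its subsequence of non-identity arrows. If $k\geq 1$ the $g_j$ define a non-degenerate simplex $\tau$ of $\mathrm{N}\C$; choosing any $x$ in the corresponding open cell $e_\tau$, the hypothesis gives $x\in\B\U_\lambda$ for some $\lambda$, and since open cells of a CW complex are disjoint and $\B\U_\lambda$ is a subcomplex, the whole closed cell $\overline{e_\tau}$ lies in $\B\U_\lambda$, forcing every $g_j$ into $\U_\lambda$. Each deleted $f_i=\id_c$ has $c$ equal to the source or target of a neighbouring $g_j$ (since the original sequence is composable), so $c\in\U_\lambda$ and thus $\id_c\in\U_\lambda$. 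If instead $k=0$, i.e.\ every $f_i=\id_c$ for a common $c$, I would apply the hypothesis to the $0$-cell $c\in \B\C$ to obtain $c\in \U_\lambda$ and conclude $\id_c\in\U_\lambda$.

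The main obstacle will be the bookkeeping in the reverse direction: the cover condition concerns arbitrary composable tuples which may include identities, while the CW structure of $\B\C$ only sees non-degenerate simplices. Tracking each omitted identity back to an object visited by some non-identity arrow (or, failing that, to a $0$-cell of $\B\C$) is the sole delicate point; beyond this, the argument is a direct translation between the combinatorics of subcategories and the combinatorics of subcomplexes.
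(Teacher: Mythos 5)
The paper does not actually prove this proposition: it is stated with ``Recall from [Tanaka]'' and no argument is given, so there is nothing internal to compare your proof against. Judged on its own, your argument is correct and is the natural one: identify the cells of $\B\C$ with non-degenerate simplices of $\mathrm{N}\C$, use that each $\B\U_\lambda$ is a subcomplex and that open cells are disjoint, and translate back and forth between composable tuples and cells. Your handling of the delicate point in the reverse direction --- reinstating the deleted identities by tracing each one to the source or target of an adjacent non-identity arrow, with the all-identities case treated via a $0$-cell --- is sound.

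The one omission is the mirror-image boundary case in the \emph{forward} direction. If the point $x$ lies in a $0$-cell, the unique non-degenerate simplex is $\sigma=(c_0)$ with $m=0$, so ``applying the geometric cover hypothesis to the tuple $\alpha_1,\dots,\alpha_m$'' is applying it to the empty tuple, which yields no $\lambda$ containing $c_0$. You need to apply the hypothesis to the length-one composable sequence $(\id_{c_0})$ instead; then some $\U_\lambda$ contains $\id_{c_0}$, hence the object $c_0$, hence the $0$-cell. This is a one-line fix, and you clearly have the needed idea since you treat the analogous $k=0$ case in the reverse direction, but as written the forward direction does not cover the vertices of $\B\C$.
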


Now we introduce our definition of ``distance'':

\begin{definition}Let $F,G\colon \mathcal{C} \to \mathcal{D}$ be two functors between small categories. The {\em categorical homotopic distance} $\cD(F,G)$ between $F$ and $G$  is the least integer $n\geq 0$ such that there exists a geometric cover $\{\U_0,\dots,\U_n\}$ of $\mathcal{C}$ with the property that  $F_{\vert \U_j}\simeq G_{\vert \U_j}$,   for all $j=0,\dots,n$.  If there is no such covering, we define $\cD(F,G)=\infty$.
\end{definition}

\begin{ex}
	Any finite group $\mathcal{G}$ can be seen as a category with only one object, where the arrows are the elements of $G$. Then it can be checked easily that if $\mathcal{G}$ is a non trivial group and $F,G\co \mathcal{G} \to \mathcal{G}$ are two functors, that is, two group homomorphisms, then $cD(F,G)=\infty$ unless $F=G$.
\end{ex}

It is easy to prove that some properties of the homotopic distance for continuous maps also hold for the categorical homotopical distance:

\begin{enumerate}
	\item \label{UNO}
	$\cD(F,G)=\cD(G,F)$.
	\item \label{DOS}
	$\cD(F,G)=0$ if and only if the functors $F,G$ are homotopic. 
	\item \label{TRES}
	 The categorical homotopic distance only depends on the homotopy class, that is,
	if $F\simeq F^\prime$ and $G\simeq G^\prime$ then $\cD(F,G)=\cD(F^\prime,G^\prime)$.
	\item \label{CUATRO}
	Given two functors $F,G\colon \C\to \Dc$ and a finite geometric covering $\U_0,\dots,\U_n$ of $\C$, it is $$\cD(F,G)\leq \sum_{k=0}^n\cD(F_{\vert \U_k},G_{\vert \U_k})+ n.$$ 
	\item A small category $\C$ is connected if and only if the categorical homotopic distance between any pair of constant functors is zero. 
\end{enumerate}

Now it comes our second definition of ``distance between functors'':

\begin{definition}The {\em weak categorical homotopic distance} $\wcD(F,G)$ between $F$ and $G$  is the least integer $n\geq 0$ such that there exists a geometric covering $\{\U_0,\dots,\U_n\}$ of $\mathcal{C}$ with the property that  $\B F_{\vert \B \U_j}\simeq \B G_{\vert \B \U_j}$,   for all $j=0,\dots,n$.  If there is no such covering, we define $\wcD(F,G)=\infty$.
\end{definition}

The weak homotopic distance satisfies the analogous statements to Properties \eqref{UNO}--\eqref{CUATRO} above of the categorical homotopical distance. Moreover, both the weak categorical and the categorical distance behave well with respect to duality: 

\begin{itemize}
	\item[(6)] \label{CINCO} Given a functor $F\colon \C\to \Dc$ we can define $F^{\op}\colon \C^{\op}\to \Dc^{\op}$. Moreover, if there is a natural transformation between $F$ and $G$, then there is a natural transformation between $G^{\op}$ and $F^{\op}$. Therefore, $\cD(F,G)=\cD(F^{\op},G^{\op})$. Notice that it holds $\B F=\B F^{\op}$. Hence, $\wcD(F,G)=\wcD(F^{\op},G^{\op})$. 
\end{itemize}

%
%

\section{Examples}\label{sec:examples_and_computations}

Recall that all categories are assumed to be small and connected.

\subsection{Categorical LS category}

We begin by restating the concept of categorical Lusternik-Schnirelmann introduced by Tanaka \cite{Tanaka3} as a particular case of the more general notion of categorical homotopic distance:

\begin{definition}
Let $\C$ be a small category. A subcategory $\U$ is {\em categorical in $\C$} if the inclusion functor is homotopic to a constant functor onto an object. The {\em(normalized) categorical Lusternik-Schnirelmann $\ccat(\C)$} is the least integer $n\geq 0$ such that there exists a geometric cover of $\C$ formed by $n+1$ categorical subcategories. If there is no such an integer we set $\ccat(\C)=\infty$.
\end{definition}

The following result is just a reformulation of the definition  of the categorical Lusternik-Schnirelmann:

\begin{prop}
The LS-category of $\C$ is the categorical homotopic distance between the identity $1_\C$ of $\C$ and any constant functor, that is $\cat(X)=\D(1_\C,*)$.
\end{prop}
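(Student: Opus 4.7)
The plan is to establish the equality $\ccat(\C)=\cD(1_\C,*)$ by a double inequality, exploiting the fact that restricting the identity functor $1_\C$ to a subcategory $\U$ is just the inclusion $\U\hookrightarrow \C$, and that restricting a constant functor to $\U$ gives a constant functor on $\U$.

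For the inequality $\cD(1_\C,*)\leq \ccat(\C)$, I would start with a geometric cover $\{\U_0,\ldots,\U_n\}$ of $\C$ by categorical subcategories realizing $\ccat(\C)=n$. For each $j$, the inclusion $\U_j\hookrightarrow \C$, which coincides with $(1_\C)_{\vert \U_j}$, is homotopic to some constant functor onto an object $c_j$. Meanwhile, $(*)_{\vert \U_j}$ is itself constant onto some fixed object $c$. Because we are assuming all categories are connected, any two constant functors on $\U_j$ with values in $\C$ are homotopic (this is Property (5) of the categorical distance, together with the observation that $c_j$ and $c$ can be joined by a zigzag of arrows in $\C$, yielding the relevant chain of natural transformations between the corresponding constant functors). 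Hence $(1_\C)_{\vert \U_j}\simeq (*)_{\vert \U_j}$ for every $j$, so the same geometric cover witnesses $\cD(1_\C,*)\leq n$.

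For the reverse inequality $\ccat(\C)\leq \cD(1_\C,*)$, I would pick a geometric cover $\{\U_0,\ldots,\U_n\}$ realizing $\cD(1_\C,*)=n$, so that $(1_\C)_{\vert \U_j}\simeq (*)_{\vert \U_j}$ on each piece. Since $(1_\C)_{\vert \U_j}$ is the inclusion $\U_j\hookrightarrow \C$ and $(*)_{\vert \U_j}$ is a constant functor onto an object of $\C$, this says precisely that each $\U_j$ is categorical in $\C$. Therefore $\{\U_0,\ldots,\U_n\}$ is a geometric cover of $\C$ by categorical subcategories, giving $\ccat(\C)\leq n$.

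There is no real obstacle here; the only point that needs care is the (implicit) use of connectedness of $\C$ to match arbitrary constant functors up to homotopy in the first inequality, which is exactly why the paper restricts from that point onwards to connected categories. Combining the two inequalities yields $\ccat(\C)=\cD(1_\C,*)$, and the choice of constant functor $*$ is immaterial for the same reason.
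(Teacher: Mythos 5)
Your proof is correct and follows essentially the same route as the paper, which offers no separate argument at all but simply declares the proposition ``a reformulation of the definition'' of the categorical LS-category. Your unpacking of the two inequalities, including the observation that connectedness of $\C$ is what lets one pass from ``homotopic to \emph{some} constant functor'' to ``homotopic to the \emph{given} constant functor,'' is exactly the content the paper leaves implicit.
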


More generally, we define the categorical Lusternik-Schnirelmann category of a functor: 

\begin{defi}
The {\em (weak) categorical Lusternik-Schnirelmann category of the functor $F \colon \C\to \Dc$} is the (weak) categorical distance bewteen $F$ and a constant functor, $\cD(F)=\cD(F,*)$. 
\end{defi}

\begin{ex}
The category of the diagonal functor $\Delta_{\C}\colon \C \to \C \times \C$ equals $\ccat(X)$. 
\end{ex}

Given a base object $c_0\in \C$ we define the inclusion functors $i_1,i_2\colon \C \to \C \times \C$ as $i_1(c)=(c,c_0)$ and $i_2(c)=(c_0,c)$. 

\begin{prop}\label{INCLCAT}
	The categorical LS-category of $\C$ equals the categorical homotopic distance between $i_1$ and $i_2$, that is, $\ccat(\C)=\cD(i_1,i_2)$.
\end{prop}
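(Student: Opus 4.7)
My plan is to establish the equality $\ccat(\C)=\cD(i_1,i_2)$ by proving the two inequalities separately, exploiting the naturality of the homotopy relation under composition (Property (3)) together with the two obvious identities $p_1\circ i_1 = 1_{\C}$ and $p_1\circ i_2 = *_{c_0}$, where $p_1\co \C\times \C\to \C$ denotes the first projection and $*_{c_0}$ is the constant functor onto $c_0$.

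For the inequality $\cD(i_1,i_2)\leq \ccat(\C)$: I would take a geometric cover $\{\U_0,\dots,\U_n\}$ of $\C$ witnessing $\ccat(\C)\leq n$, so each inclusion $j_k\co \U_k \hookrightarrow \C$ is homotopic to some constant functor $*_{c_k}\co \U_k \to \C$. Composing on the left with $i_1$ and $i_2$ respectively yields
\[
i_1|_{\U_k} = i_1\circ j_k \simeq i_1\circ *_{c_k} = *_{(c_k,c_0)},\qquad i_2|_{\U_k} = i_2\circ j_k \simeq i_2\circ *_{c_k} = *_{(c_0,c_k)}.
\]
Since $\C$ is connected, so is $\C\times \C$, and hence by the remark following Definition \ref{def:connectedness_for_categories} the two constant functors $*_{(c_k,c_0)}$ and $*_{(c_0,c_k)}$ are homotopic. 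By transitivity, $i_1|_{\U_k}\simeq i_2|_{\U_k}$ on every piece of the cover, so the same cover witnesses $\cD(i_1,i_2)\leq n$.

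For the converse inequality $\ccat(\C)\leq \cD(i_1,i_2)$: I would start from a geometric cover $\{\U_0,\dots,\U_n\}$ with $i_1|_{\U_k}\simeq i_2|_{\U_k}$. Post-composing each such homotopy with the projection $p_1$ and using that composition preserves homotopy, I obtain
\[
j_k = 1_{\C}\circ j_k = p_1\circ i_1\circ j_k \simeq p_1\circ i_2\circ j_k = *_{c_0}\circ j_k,
\]
which says exactly that $\U_k$ is a categorical subcategory of $\C$. Hence $\{\U_k\}$ witnesses $\ccat(\C)\leq n$.

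The argument is essentially routine once one notes the two identities involving $p_1$; the only point that requires a little thought is the step in the first direction where one must pass between the two constant functors $*_{(c_k,c_0)}$ and $*_{(c_0,c_k)}$, and this is where the standing assumption that $\C$ be connected (and therefore $\C\times \C$) plays its role.
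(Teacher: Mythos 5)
Your proof is correct and follows essentially the same route as the paper's: both directions hinge on the identities $p_1\circ i_1=1_{\C}$ and $p_1\circ i_2=*_{c_0}$, with the first inequality obtained by concatenating through constant functors and the second by post-composing the homotopy with $p_1$. If anything, your version is slightly more careful than the paper's, since you allow each categorical piece to contract to an arbitrary object $c_k$ and then use connectedness of $\C\times\C$ to pass between $*_{(c_k,c_0)}$ and $*_{(c_0,c_k)}$, whereas the paper tacitly assumes the contraction lands on the base object $c_0$.
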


\begin{proof}
	First, we show that $\cD(i_1,i_2)\leq \ccat(X)$. Assume that a subcategory $\U$ of $\C$ is categorical and let  $H\colon \U \times \I_m \to \C$ be the homotopy between the inclusion functor and the constant functor to $c_0\in \C$, i.e. $H(c,0)=c$ and $H(c,1)=c_0$. We define a homotopy $H^\prime\colon \U \times \I_{2m} \to \C$ between $(i_1)_{\vert \U}$ and $(i_2)_{\vert \U}$ (by concatenation) as
	$$H^\prime(c,i)=\begin{cases}
	\big(H(c,i),c_0\big) & \text{if\ } 0\leq i\leq m,\\
	\big(c_0,H(c,2m-i)\big)  &\text{if\ }m\leq i \leq 2m.\end{cases}$$
	Note that:
	$$H^\prime(c,0)=\big(H(c,0),c_0\big)=(c,c_0)=i_1(c)$$
	while
	$$H^\prime(c,2m)=\big(c_0,H(c,0)\big)=(c_0,c)=i_2(c).$$
	Second, we show that $\ccat(X)\leq \cD(i_1,i_2)$. Assume that there is a homotopy $H\colon \U\times \I_m\to \C \times \C$ between $(i_1)_{\vert \U}$ and $(i_2)_{\vert \U}$, i.e., $H(c,0)=(c,c_0)$ and $H(x,1)=(c_0,c)$. Let  $p_1\circ H$ be the first component of $H$. Then $p_1\circ H$  is a homotopy between the inclusion functor of $\U$ and the constant functor onto $c_0$.  
\end{proof}

\subsection{Categorical complexity of a category}

Motivated by the approach adopted by one of the authors to the Discrete Topological Complexity in the setting of simplicial complexes in \cite{SamQuiJa_TC}, we define the complexity of a category as follows:

\begin{definition}
A subcategory $\U$ of $\C \times \C$ is a {\em Farber subcategory} if there exists a functor $F\co \U\to \C$ such that $\Delta \circ F \simeq i_{\U}$ where $i_{\U}$ is the inclusion functor. The {\em(normalized) categorical complexity of $\C$, $\cTC(\C)$}, is the least integer $n\geq 0$ such that there exists a geometric cover of $\C$ formed by $n+1$ Farber subcategories. If there is no such an integer we set $\cTC(\C)=\infty$.
\end{definition}

\begin{theorem}\label{PROJECT}
The categorical complexity of a small category $\C$ is the categorical homotopic distance between the two projections $p_1,p_2\colon \C \times \C \to \C$, that is,
$\cTC(\C)=\cD(p_1,p_2)$.
\end{theorem}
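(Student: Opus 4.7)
The plan is to prove the equality by two inequalities, mirroring the structure of the proof of Proposition~\ref{INCLCAT}.

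For $\cD(p_1,p_2)\leq \cTC(\C)$, I would start with a geometric cover $\{\U_0,\dots,\U_n\}$ of $\C\times\C$ by Farber subcategories. For each $\U_j$ pick a witnessing functor $F_j\colon \U_j\to\C$ together with a homotopy $K_j\colon \U_j\times\I_m\to\C\times\C$ from $\Delta\circ F_j$ to $i_{\U_j}$. Post-composing $K_j$ with the two projections and using the identities $p_1\circ\Delta=p_2\circ\Delta=1_{\C}$, I obtain homotopies $F_j\simeq (p_1)_{\vert \U_j}$ and $F_j\simeq (p_2)_{\vert \U_j}$, hence $(p_1)_{\vert \U_j}\simeq (p_2)_{\vert \U_j}$ by transitivity. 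The same cover then witnesses $\cD(p_1,p_2)\leq n$.

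For $\cTC(\C)\leq \cD(p_1,p_2)$, I would start from a geometric cover $\{\U_0,\dots,\U_n\}$ of $\C\times\C$ together with, for each $j$, a homotopy $H_j\colon \U_j\times\I_m\to\C$ from $(p_1)_{\vert \U_j}$ to $(p_2)_{\vert \U_j}$. I would take the candidate Farber functor to be $F_j:=(p_1)_{\vert \U_j}\colon \U_j\to\C$ and produce an explicit homotopy $H_j'\colon \U_j\times\I_m\to\C\times\C$ from $\Delta\circ F_j$ to $i_{\U_j}$ by freezing the first coordinate,
\[
H_j'(x,i)=\bigl(p_1(x),\,H_j(x,i)\bigr),
\]
with the analogous formula on arrows. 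The endpoint check is then immediate: $H_j'(x,0)=(p_1(x),p_1(x))=\Delta(F_j(x))$ and, writing $x=(a,b)\in\U_j$, $H_j'(x,m)=(p_1(x),p_2(x))=(a,b)=i_{\U_j}(x)$. This exhibits $\U_j$ as a Farber subcategory and yields $\cTC(\C)\leq n$.

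The main obstacle is essentially bookkeeping: one must verify that $H_j'$ is genuinely a functor on the product category $\U_j\times\I_m$, not merely a map on objects. This reduces to functoriality of $p_1$ and of $H_j$ applied coordinatewise, so no substantive difficulty arises. The conceptual content of the argument is that a homotopy between the two projections can always be promoted to a homotopy between $\Delta\circ p_1$ and the inclusion of a subcategory of $\C\times\C$ by holding the first coordinate fixed, which is the natural product-category analogue of the concatenation trick used in the proof of Proposition~\ref{INCLCAT}.
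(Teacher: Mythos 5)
Your proof is correct and follows essentially the same route as the paper's: the paper likewise shows that $\U\subseteq\C\times\C$ is a Farber subcategory precisely when $(p_1)_{\vert\U}\simeq(p_2)_{\vert\U}$, using the identical ``freeze the first coordinate'' homotopy $(x,i)\mapsto(p_1(x),H_j(x,i))$ with $F_j=(p_1)_{\vert\U_j}$ in one direction, and an explicit concatenation of $p_1\circ K_j$ reversed with $p_2\circ K_j$ in the other, where you instead invoke symmetry and transitivity of the homotopy relation --- an inessential difference since concatenability of homotopies is established earlier in the paper.
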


\begin{proof}
We will prove that a subcategory $\U$ of $\C \times \C$ is a Farber subcategory if and only if the projection functors are homotopic in  $\U$.  First, assume that there exists a functor $F\co \U\to \C$ such that $\Delta \circ F \simeq i_{\U}$.  Let us denote the homotopy between $\Delta \circ F$ and $i_{\U}$ by $H\colon \U \times \mathcal{I}_m\to \C \times \C$, where 
$$H_0(c_1,c_2)=(\Delta \circ F)(c_1,c_2)=(F(c_1,c_2),F(c_1,c_2))$$ and $H_1(c_1,c_2)=(c_1,c_2)$. We define a homotopy $H^\prime\colon \U\times {I}_{2m}\to \C$ between the projection functors as follows:
	$$H'(c_1,c_2,i)=\begin{cases}
	p_1\circ H(c_1,c_2,m-i) & \text{if\ } 0\leq i\leq m,\\
	p_2 \circ H(c_1,c_2,i-m)  &\text{if\ }m\leq i \leq 2m.
	\end{cases}$$
	Conversely, assume that the projection functors are homotopic in  $\U$ through a homotopy $H^\prime\colon \U \times \mathcal{I}_m\to \C$ where $H^\prime_0=p_1$ and $H^\prime_m=p_2$ and we will prove that there exists a functor $F\co \U\to \C$ such that $\Delta \circ F \simeq i_{\U}$. Define $F=p_1$. Now, the homotopy between  $\Delta \circ F$ and $i_{\U}$ is given by $G\colon \U \times \mathcal{I}_m\to \C \times \C$, where $G(c_1,c_2,m)=(c_1,H^\prime(c_1,c_2,m))$.
\end{proof}

\section{Properties}\label{sec:properties}

Recall that all categories are assumed to be small and connected.

\subsection{Compositions} We prove several elementary properties, starting with the behaviour of the homotopic distance under compositions. Several properties of $\ccat$ and $\cTC$ can be deduced from our general results.

\begin{proposition}\label{IZQ}
Let be functors $F,G\colon \C \to \Dc$ and $H\colon \Dc \to \E$. Then 
$$\cD(H\circ F,H\circ G)\leq \cD(F,G).$$
\end{proposition}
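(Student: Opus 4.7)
The plan is to use the same geometric cover that witnesses $\cD(F,G)$ and show it also witnesses an upper bound for $\cD(H\circ F, H\circ G)$. The key ingredient is the fact, already recorded in the excerpt, that the homotopy relation between functors is compatible with compositions.

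First I would dispose of the trivial case $\cD(F,G)=\infty$, where the inequality is automatic. Otherwise, set $n = \cD(F,G)$, so by definition there exists a geometric cover $\{\U_0,\ldots,\U_n\}$ of $\C$ such that $F_{\vert \U_j} \simeq G_{\vert \U_j}$ for every $j=0,\ldots,n$.

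Next, I would observe that the restrictions of $H\circ F$ and $H\circ G$ to $\U_j$ coincide respectively with $H \circ F_{\vert \U_j}$ and $H \circ G_{\vert \U_j}$. Since homotopy between functors is preserved by post-composition with a fixed functor (as noted in the excerpt, if $F\simeq F'$ and $G\simeq G'$ then $F\circ G\simeq F'\circ G'$, taking $H$ against itself), we get $(H\circ F)_{\vert \U_j} \simeq (H\circ G)_{\vert \U_j}$ for every $j$. Hence the very same cover $\{\U_0,\ldots,\U_n\}$ witnesses $\cD(H\circ F, H\circ G) \leq n$, which is the desired inequality.

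There is no genuine obstacle here: the proof is a direct unwinding of the definition, with the only substantive input being the congruence of the homotopy relation with respect to composition. The analogous statement for the weak distance would follow identically, replacing $\simeq$ by $\simeq_w$ and using that $\B$ is a functor preserving homotopy.
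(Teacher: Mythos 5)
Your proof is correct and follows exactly the same route as the paper's: reuse the geometric cover witnessing $\cD(F,G)$ and invoke the compatibility of the homotopy relation with post-composition by $H$. The only (harmless) additions are the explicit treatment of the case $\cD(F,G)=\infty$ and the remark about the weak distance.
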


\begin{proof}Let $\cD(F,G)\leq n$ and let $\{\U_0,\dots, \U_n\}$ be a geometric covering of $\C$ with $F_j=F_{\vert U_j}$ homotopic to $G_j=G_{\vert \U_j}$. Then 
$$(H\circ F)_j=H\circ F_j\simeq H\circ G_j=(H\circ G)_j,$$ so $\cD(H\circ F,H \circ G)\leq n$.
\end{proof}

\begin{cor}\label{cor_d_upper_bound_domain}
Let $F\colon \C \to \Dc$ be a functor. Then $\ccat(F)\leq \ccat(\C)$. 
\end{cor}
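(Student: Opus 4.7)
The plan is to deduce this corollary directly from Proposition \ref{IZQ} by choosing the right functors to compose. Recall that, by definition, $\ccat(\C) = \cD(1_\C, *_{c_0})$ for a constant functor $*_{c_0}\co \C \to \C$ onto some chosen object $c_0 \in \C$, and $\ccat(F) = \cD(F, *)$ for some constant functor into $\Dc$.

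First I would fix an object $c_0 \in \C$ and let $*_{c_0}\co \C \to \C$ denote the constant functor onto $c_0$, so that $\ccat(\C) = \cD(1_\C, *_{c_0})$. Now I apply Proposition \ref{IZQ} with $H = F$ (in the role of the outer functor), taking the two inner functors to be $1_\C$ and $*_{c_0}$. This yields
$$\cD(F \circ 1_\C,\, F \circ *_{c_0}) \leq \cD(1_\C, *_{c_0}) = \ccat(\C).$$

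Next I would identify the composites on the left-hand side: clearly $F \circ 1_\C = F$, while $F \circ *_{c_0}$ sends every object of $\C$ to $F(c_0) \in \Dc$ and every arrow to the identity of $F(c_0)$, so it is the constant functor $*_{F(c_0)}\co \C \to \Dc$. Therefore the inequality becomes $\cD(F, *_{F(c_0)}) \leq \ccat(\C)$, and since $\ccat(F)$ is by definition $\cD(F, *)$ for a constant functor into $\Dc$, we conclude $\ccat(F) \leq \ccat(\C)$.

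There is essentially no obstacle here: the content is entirely bookkeeping about how composition with $F$ turns a constant functor into a constant functor. The only minor point to check, if one wishes to be fully careful, is that the definition of $\ccat(F)$ does not depend on the particular choice of constant functor (which follows from connectedness of $\Dc$ and property \eqref{TRES}, since any two constant functors into a connected category are homotopic). Once this is noted the result follows immediately.
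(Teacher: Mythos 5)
Your proof is correct and follows exactly the paper's argument: apply Proposition \ref{IZQ} with the outer functor $F$ to the pair $1_\C$ and a constant functor, then observe that $F\circ 1_\C = F$ and that $F$ composed with a constant functor is again constant. The extra remark on independence of the choice of constant functor is a reasonable bit of added care but does not change the route.
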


\begin {proof}Take $1_{\C}$ and a constant functor $c_0$ from $\C$ to $\C$. Then
$\D(F\circ1_{\C},F(c_0))\leq \D(1_{\C},c_0)$.
\end{proof}

\begin{proposition}\label{DER}
Let be functors $F,G\colon \C \to \Dc$ and $H\colon \E\to \C$. Then 
$$\cD(F\circ H,G\circ H)\leq \cD(F,G).$$
\end{proposition}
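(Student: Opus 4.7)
The plan is to pull back the geometric cover of $\C$ witnessing $\cD(F,G)$ along $H$ to obtain a geometric cover of $\E$ witnessing the desired bound on $\cD(F\circ H, G\circ H)$.

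Concretely, suppose $\cD(F,G)\leq n$ and let $\{\U_0,\dots,\U_n\}$ be a geometric cover of $\C$ with $F_{\vert \U_j}\simeq G_{\vert \U_j}$ for each $j$. For each $j$ I would define $H^{-1}(\U_j)$ to be the subcategory of $\E$ whose objects are those $e\in\ob(\E)$ with $H(e)\in\ob(\U_j)$, and whose arrows are those $f\in\arr(\E)$ with $H(f)\in\arr(\U_j)$. Functoriality of $H$ ensures that $H^{-1}(\U_j)$ is indeed a subcategory: identities of the selected objects map to identities in $\U_j$, and a composite $g\circ f$ of two selected arrows satisfies $H(g\circ f)=H(g)\circ H(f)\in\arr(\U_j)$ because $\U_j$ is closed under composition.

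Next I would check that $\{H^{-1}(\U_0),\dots,H^{-1}(\U_n)\}$ is a geometric cover of $\E$. Given any composable sequence $f_1,\dots,f_k$ in $\E$, the sequence $H(f_1),\dots,H(f_k)$ is composable in $\C$; since $\{\U_\lambda\}$ is a geometric cover of $\C$ there exists an index $\lambda$ with $H(f_i)\in\arr(\U_\lambda)$ for all $i$, hence $f_i\in\arr(H^{-1}(\U_\lambda))$ for all $i$. Finally, for each $j$ the restriction $(F\circ H)_{\vert H^{-1}(\U_j)}$ equals $F_{\vert \U_j}\circ H_{\vert H^{-1}(\U_j)}$, and similarly for $G\circ H$; precomposing a homotopy $K\colon \U_j\times \I_m\to\Dc$ between $F_{\vert \U_j}$ and $G_{\vert \U_j}$ with $H_{\vert H^{-1}(\U_j)}\times \id_{\I_m}$ produces a homotopy between $(F\circ H)_{\vert H^{-1}(\U_j)}$ and $(G\circ H)_{\vert H^{-1}(\U_j)}$. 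Therefore $\cD(F\circ H,G\circ H)\leq n$.

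The proof involves no substantial obstacle: the only subtlety is making sure the pullback $H^{-1}(\U_j)$ is well-defined as a subcategory and that pullbacks of geometric covers are geometric covers, both of which follow immediately from $H$ being a functor. One could equivalently note that $\B H$ sends $\B(H^{-1}(\U_j))$ into $\B\U_j$, which would also yield the weak version $\wcD(F\circ H,G\circ H)\leq \wcD(F,G)$ by the same argument.
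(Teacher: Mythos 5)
Your proof is correct and follows essentially the same route as the paper: pull back the geometric cover along $H$ via the subcategories $H^{-1}(\U_j)$, factor the restriction of $H$ through $\U_j$, and transport the homotopies by precomposition. You are in fact slightly more careful than the paper, which asserts without verification that $\{H^{-1}(\U_j)\}$ is a geometric cover of $\E$, a point you check explicitly.
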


\begin{proof}Let $\cD(F,G)\leq n$ and let $\{\U_0,\dots, \U_n\}$ be a geometric covering of $\C$ with $F_j\simeq G_j\colon \U_j \to \Dc$. Since $\C$ is a small category, for each $\U=\U_j$ we can define the subcategory $H^{-1}(\U)$ where 
	$$\ob(H^{-1}(\U))=\{e\in \ob(\E) \co H(e)\in \ob(\U)\}$$ and if $e,e^\prime\in\ob(H^{-1}(\U))$ then
	$$\arr(e,e^\prime)=\{\alpha\in \arr(\E) \co H(\alpha)\in \arr_{\U}(h(e),h(e^\prime))\}.$$ 
	Consider the geometric covering of $\E$ whose elements are $V_j=H^{-1}(\U_j)$. The restriction $H_j\colon V_j \to \C$ can be written as the composition of  $\bar H_j\colon V_j\to U_j$, where $\bar H_j(c)=H(c)$, and the inclusion $I_j$ of $U_j$ in $\C$. Then we have that 
$$(F\circ H)_j=F_j\circ \bar H_j \simeq G_j\circ \bar H_j=G\circ I_j\circ \bar H_j=G\circ H_j=(G\circ H)_j,$$
 hence $\cD(F\circ H,G\circ H)\leq n$.
\end{proof}

\begin{cor}
Given a functor $F\colon \C \to \Dc$, then $\ccat(F)\leq \ccat(\Dc)$.
\end{cor}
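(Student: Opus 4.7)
The plan is to apply Proposition \ref{DER} (right-composition behavior of $\cD$) rather than Proposition \ref{IZQ}, since here we are pre-composing with $F$ and comparing functors out of $\Dc$. Specifically, I would take as the two functors being compared on $\Dc$ the identity $1_{\Dc}$ and a constant functor $d_0 \colon \Dc \to \Dc$ onto some object $d_0 \in \Dc$; their categorical homotopic distance is by definition exactly $\ccat(\Dc)$.

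Applying Proposition \ref{DER} with $H = F \colon \C \to \Dc$ then yields
$$\cD(1_{\Dc} \circ F,\, d_0 \circ F) \leq \cD(1_{\Dc},\, d_0) = \ccat(\Dc).$$
It remains to identify the left-hand side with $\ccat(F)$. Clearly $1_{\Dc} \circ F = F$. Moreover, $d_0 \circ F \colon \C \to \Dc$ sends every object of $\C$ to $d_0$ and every arrow to $\mathrm{id}_{d_0}$, so it is precisely a constant functor from $\C$ to $\Dc$. Hence $\cD(1_{\Dc} \circ F,\, d_0 \circ F) = \cD(F, *) = \ccat(F)$, and the stated inequality follows.

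There is no real obstacle here; the argument is a direct corollary of Proposition \ref{DER} once one recognizes that the appropriate constant functor on $\C$ is obtained by composing a constant functor on $\Dc$ with $F$. The only point worth being explicit about is that the categorical LS-category of $F$ does not depend on which constant functor is chosen as the target (since $\C$ is assumed connected, any two constant functors are homotopic, and $\cD$ is invariant under homotopy by property \eqref{TRES}), which justifies matching the constant functor used for $\ccat(F)$ with the one coming from $\ccat(\Dc)$.
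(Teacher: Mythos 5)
Your proof is correct and follows exactly the same route as the paper: both apply Proposition \ref{DER} with $H=F$ to the pair $1_{\Dc}$ and a constant functor $d_0$ on $\Dc$, then identify $d_0\circ F$ with a constant functor on $\C$. Your write-up is simply more explicit about why the choice of constant functor is immaterial, which the paper leaves implicit.
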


\begin{proof}Take $1_{\Dc}$ and a constant functor $d_0$ from $\Dc$ to $\Dc$.
Then
$\cD(1_{\Dc}\circ F, d_0\circ F)\leq \cD(1_{\Dc},y_0)$.
\end{proof}

The latter result can be extended. 

\begin{cor}\label{cor:d_upper_bound_codomain}
Let $F,G\colon \C \to \Dc$ be functors. Then 
$$\cD(F,G)+1\leq (\ccat (F)+1)(\ccat(G)+1).$$
\end{cor}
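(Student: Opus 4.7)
The plan is to imitate the classical proof that $\TC(X) + 1 \le (\cat(X)+1)^2$ in topology: combine a cover witnessing $\ccat(F)$ with a cover witnessing $\ccat(G)$ by intersecting their members pairwise. Write $\ccat(F) = m$ and $\ccat(G) = n$ (the case where either is infinite is trivial). By definition we obtain geometric covers $\{\U_0,\dots,\U_m\}$ and $\{\V_0,\dots,\V_n\}$ of $\C$, together with objects $d_0^F,\dots,d_m^F, d_0^G,\dots,d_n^G$ of $\Dc$ such that $F_{\vert \U_i} \simeq d_i^F$ and $G_{\vert \V_j} \simeq d_j^G$ for every $i,j$, where $d_i^F$ and $d_j^G$ denote the corresponding constant functors.

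Next I would form the family $\{\U_i \cap \V_j\}_{0 \le i \le m,\,0\le j \le n}$, which consists of $(m+1)(n+1)$ subcategories of $\C$ (with objects and arrows the intersections). The first step is to check that this family is still a geometric cover: given any sequence of composable arrows $f_1,\dots,f_k$ in $\C$, pick $i$ with all $f_\ell \in \U_i$ and $j$ with all $f_\ell \in \V_j$; then all $f_\ell$ lie in $\U_i \cap \V_j$. So by Proposition~\ref{prop:condition_being_geometric_cover} (equivalently, directly from the definition) this is a geometric cover.

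Then on each $\U_i \cap \V_j$, the restrictions of $F$ and $G$ satisfy $F_{\vert \U_i \cap \V_j} \simeq (d_i^F)_{\vert \U_i \cap \V_j}$ and $G_{\vert \U_i \cap \V_j} \simeq (d_j^G)_{\vert \U_i \cap \V_j}$ (these are restrictions of homotopies, so they remain homotopies). Since $\Dc$ is assumed connected, any two constant functors into $\Dc$ are homotopic (by the remark following Definition~\ref{def:connectedness_for_categories}), and constant functors factor through a point, so the two constant functors $d_i^F$ and $d_j^G$ on $\U_i \cap \V_j$ are homotopic. Transitivity of the homotopy relation gives $F_{\vert \U_i \cap \V_j} \simeq G_{\vert \U_i \cap \V_j}$.

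Therefore $\{\U_i \cap \V_j\}$ is a geometric cover of $\C$ by $(m+1)(n+1)$ subcategories on each of which $F$ and $G$ are homotopic. By the definition of the (normalized) categorical homotopic distance this yields $\cD(F,G) \le (m+1)(n+1) - 1$, which is the claimed inequality. The only mildly delicate step is verifying that intersecting subcategories preserves the geometric cover property, but this is immediate from the fact that the defining condition is a universal quantifier over finite composable sequences, which is simultaneously realized in a single $\U_i$ and a single $\V_j$.
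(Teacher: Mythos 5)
Your proof is correct and follows essentially the same route as the paper: intersect the two geometric covers witnessing $\ccat(F)$ and $\ccat(G)$ pairwise, observe that the intersections still form a geometric cover, and conclude $F\simeq G$ on each piece via constant functors. The only (harmless) difference is that you allow distinct constant functors and reconcile them using the connectedness of $\Dc$, whereas the paper fixes a single constant functor $d_0$ from the outset.
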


\begin{proof}
Denote by $d_0$ a constant functor from $\C$ to $\Dc$.  Assume that $\ccat(F)=\cD(F,d_0)\leq m$, $\ccat(G)=\D(G,d_0)\leq n$ and let $\{\U_0,\ldots,\U_m\}$, $\{\mathcal{V}_0,\ldots,\mathcal{V}_n\}$ be the corresponding geometric coverings of $\C$. The subcategories $W_{i,j}=\U_i\cap \mathcal{V}_j$ (where the intersection means the intersections of the sets of objects and intersections of the sets of arrows) form a geometric cover of $\C$.   Moreover, $F \simeq d_0\simeq G$ on $W_{i,j}$, so $cD(F,G)\leq m\cdot n$ - 1. The result follows.
\end{proof}

\begin{cor}\label{TCCAT1} 
$\ccat(\C)\leq \cTC(\C)$.
\end{cor}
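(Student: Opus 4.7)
The plan is to combine Proposition \ref{INCLCAT} (or the original definition of $\ccat$) with Theorem \ref{PROJECT} and the precomposition estimate in Proposition \ref{DER}. The bound should drop out almost immediately: $\ccat$ is a distance between two maps \emph{into} $\C \times \C$, while $\cTC$ is a distance between two maps \emph{out of} $\C \times \C$, and these are linked by the identities $p_k \circ i_j \in \{1_\C, c_0\}$.

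Concretely, fix a base object $c_0 \in \C$ and consider the inclusion $i_1 \colon \C \to \C \times \C$ given by $i_1(c) = (c, c_0)$. Composing with the projections $p_1, p_2 \colon \C \times \C \to \C$ yields $p_1 \circ i_1 = 1_\C$ and $p_2 \circ i_1 = c_0$ (the constant functor onto $c_0$). Applying Proposition \ref{DER} with $H = i_1$ to the pair $(p_1, p_2)$ then gives
\[
\ccat(\C) = \cD(1_\C, c_0) = \cD(p_1 \circ i_1, p_2 \circ i_1) \leq \cD(p_1, p_2) = \cTC(\C),
\]
where the outer equalities use the definition of $\ccat$ and Theorem \ref{PROJECT}. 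This is exactly the desired inequality.

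There is no real obstacle: the only subtlety is lining up the right inclusion with the right pair of projections so that the compositions collapse to $1_\C$ and a constant functor. One could equally well use $i_2(c) = (c_0, c)$, giving $p_1 \circ i_2 = c_0$ and $p_2 \circ i_2 = 1_\C$, with the same conclusion. Alternatively, one could argue directly from covers: a geometric cover $\{\U_0, \dots, \U_n\}$ of $\C \times \C$ on which $p_1$ and $p_2$ are homotopic pulls back under $i_1$ to a geometric cover $\{i_1^{-1}(\U_0), \dots, i_1^{-1}(\U_n)\}$ of $\C$ on which $1_\C$ and $c_0$ become homotopic, but invoking Proposition \ref{DER} packages this manipulation cleanly.
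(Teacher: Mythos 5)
Your argument is correct and is essentially identical to the paper's own proof: the paper also applies Proposition \ref{DER} to the pair $(p_1,p_2)$ precomposed with an inclusion functor (it uses $i_2$, so that $\cD(*,1_\C)=\cD(p_1\circ i_2,p_2\circ i_2)\leq \cD(p_1,p_2)$), which is the symmetric variant you yourself mention. No gaps.
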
   

\begin{proof}In Proposition \ref{DER} consider the inclusion funtors $i_1,i_2\colon \C\to \C\times \C$, so $$\cD(*,1_{\C})=\cD(p_1\circ i_2,p_2\circ i_2)\leq \cD(p_1,p_2).\qedhere$$
\end{proof}

\subsection{Domain and codomain}

\begin{proposition}\label{prop_terminal_initial_object_implies_zero_homotopic_distance}
Assume that $F,G\colon \C\to \Dc$  are two functors between small categories. If at least one of the categories $\C$ or $\Dc$ have an initial or terminal object, then $\cD(F,G)=0$. 
\end{proposition}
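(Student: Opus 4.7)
The plan is to reduce everything to the statement that $\cD(F,G)=0$ is the same as $F\simeq G$, and then exploit contractibility via Proposition~\ref{prop:adjoints_are_homotopy_equiv}. Observe first that $\cD(F,G)=0$ is equivalent to saying that the trivial geometric cover $\{\C\}$ witnesses $F\simeq G$; so the whole proposition amounts to proving $F\simeq G$ under the hypothesis. Proposition~\ref{prop:adjoints_are_homotopy_equiv} already tells us that if $\C$ (respectively $\Dc$) has an initial or terminal object, then $\C$ (respectively $\Dc$) is contractible, i.e.\ $1_\C\simeq c_0$ for some constant functor $c_0$ on the initial/terminal object (and similarly for $\Dc$).

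I would handle the two cases separately. First, suppose $\Dc$ has, say, a terminal object $d_0$, so $1_\Dc\simeq d_0$ (the constant functor). Since the homotopy relation is preserved by composition (stated just after the second definition of homotopy in Section~\ref{sec:cat_dis_between_funct}), we get $F=1_\Dc\circ F\simeq d_0\circ F$, which is the constant functor on $\C$ with value $d_0$. Similarly $G\simeq d_0$, so by transitivity $F\simeq G$ and thus $\cD(F,G)=0$.

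Second, suppose $\C$ has, say, an initial object $c_0$, so there is a homotopy $H\co \C\times \I_m\to \C$ with $H_0=c_0$ (the constant functor) and $H_m=1_\C$. Composing with $F$ gives a homotopy $F\circ H\co \C\times \I_m\to \Dc$ between the constant functor on $\C$ with value $F(c_0)$ and $F$ itself; hence $F\simeq F(c_0)$ as functors $\C\to \Dc$. The same argument gives $G\simeq G(c_0)$. Since we assume throughout the paper that all categories are connected, any two constant functors into $\Dc$ are homotopic (this is the fifth bulleted property of $\cD$ in Section~\ref{sec:cat_dis_between_funct}, or equivalently the remark after Definition~\ref{def:connectedness_for_categories}); therefore $F(c_0)\simeq G(c_0)$ and by transitivity $F\simeq G$. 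The dual cases where the distinguished object is terminal in $\C$ or initial in $\Dc$ are identical, switching the role of the endpoints of the homotopy.

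The only mild subtlety is the argument that $F(c_0)\simeq G(c_0)$ as constant functors, which uses the standing connectedness assumption; the rest is a bookkeeping exercise on how the homotopy relation interacts with composition. I do not foresee a real obstacle here, as everything follows essentially formally from Proposition~\ref{prop:adjoints_are_homotopy_equiv} and the fact that composition respects $\simeq$.
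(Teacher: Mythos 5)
Your proof is correct and follows the same route as the paper: invoke Proposition~\ref{prop:adjoints_are_homotopy_equiv} to get contractibility of $\C$ or $\Dc$, then conclude $F\simeq G$ so that the trivial cover $\{\C\}$ works. The paper simply asserts that any two functors with contractible domain or codomain are homotopic, whereas you spell out that assertion (including the point, left implicit in the paper, that the domain case needs the standing connectedness assumption on $\Dc$ to identify the constant functors at $F(c_0)$ and $G(c_0)$).
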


\begin{proof}
Recall from Proposition \ref{prop:adjoints_are_homotopy_equiv} that a category $\E$ that has an initial or terminal object, is contractible. Since any pair of functors which contractible domain or codomain are homotopic, it is $\cD(F,G)=0$.
\end{proof}

\begin{remark}
The converse of Proposition \ref{prop_terminal_initial_object_implies_zero_homotopic_distance} does not hold. Recall that a poset $P$, when seen as a category (see Section \ref{sec:Posets}), has a terminal object $x$ if and only if $x$ is the unique maximal element of $P$ and a dual statement applies to initial elements. Consider the poset $P=\{x,y,z,w,t\}$ with the order $x\leq z,w,t$, $y\leq z,w,t$ and $z\leq w,t$. It is contractible and therefore the distance between any pair of functors is zero. However, it has no terminal nor initial objects.
\end{remark}

\begin{theorem}\label{CATDOM}
Let $F,G \colon \C \to \Dc$ be two functors. Then
$$ \cD(F, G)\leq \ccat(C).$$
\end{theorem}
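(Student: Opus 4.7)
My plan is to start with a geometric cover witnessing $\ccat(\C)$ and show that, after composing with $F$ and $G$, the \emph{same} cover witnesses the distance $\cD(F,G)$.

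Set $n=\ccat(\C)$ and pick a geometric cover $\{\U_0,\dots,\U_n\}$ of $\C$ such that each inclusion $i_j\co \U_j\hookrightarrow \C$ is homotopic to a constant functor onto some object $c_j\in \C$. Composing with $F$ and with $G$ and using that the homotopy relation is preserved by composition (noted in Section \ref{sec:cat_dis_between_funct}), we obtain
\[F_{\vert \U_j}=F\circ i_j\simeq F(c_j)\qquad\text{and}\qquad G_{\vert \U_j}=G\circ i_j\simeq G(c_j),\]
where $F(c_j)$ and $G(c_j)$ now denote the constant functors $\U_j\to \Dc$ onto these respective objects.

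To chain these two homotopies I need the constant functors onto $F(c_j)$ and $G(c_j)$ to be homotopic as functors $\U_j\to \Dc$. This is exactly where the standing connectedness assumption on $\Dc$ enters: since $\Dc$ is connected, Definition \ref{def:connectedness_for_categories} gives a zigzag of arrows in $\Dc$ joining $F(c_j)$ to $G(c_j)$, which immediately promotes to a natural transformation zigzag between the two constant functors $\U_j\to \Dc$ (constant functors are natural in everything). Hence $F(c_j)\simeq G(c_j)$ as functors on $\U_j$, and by transitivity $F_{\vert \U_j}\simeq G_{\vert \U_j}$.

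This holds for every $j\in\{0,\dots,n\}$, so the cover $\{\U_0,\dots,\U_n\}$ witnesses $\cD(F,G)\leq n=\ccat(\C)$, as required. The only conceptual step that is not completely formal is the passage from connectedness of $\Dc$ to homotopy of the two constant functors on $\U_j$; this is the step I would write most carefully, although in view of the remark preceding the standing hypothesis it is essentially tautological.
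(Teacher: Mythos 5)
Your proof is correct and follows essentially the same route as the paper: restrict to a geometric cover by categorical subcategories and push the contracting homotopies forward through $F$ and $G$. You are in fact slightly more careful than the paper's own argument, whose concatenated homotopy $H^\prime$ silently requires joining the constant functors onto $F(c_0)$ and $G(c_0)$ at the midpoint --- precisely the connectedness-of-$\Dc$ step you make explicit.
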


%
%
%

\begin{proof} It is enough to prove that  $$\cD(F,G)=\cD(F\circ 1_{\C},G\circ 1_{\C})\leq \cD(1_{\C},c_0)=\ccat(X).$$
	Assume $\cD(1_{\C},c_0)= n$, and let $\{\U_0,\ldots,\U_n\}$ be a geometric covering for $\C$ such that, for all $j$,  $1_{\vert \U_j}\simeq (c_0)_{\vert \U_j}$ by a homotopy $H\colon \U_j\times \I_{m} \to \C$. Let us define the homotopy $H^\prime\colon \U_j\times I_{2m} \to \Dc$ as follows:
		\begin{equation*}\label{FORMULA}
	H^\prime(c,t)= 
	\begin{cases}    
	F \circ \mathcal{H}(c,i), & \text{if\ } 0 \leq i \leq m, \\
	G \circ \mathcal{H}(c,2m-i), & \text{if\ } m \leq  i \leq 2m.
	\end{cases}
	\end{equation*}
	Hence, $\cD(F\circ 1_{\C},G\circ1_{\C})\leq n$.
\end{proof}

What follows is the categorical version of a well known result from Farber \cite{Farber}.

\begin{cor}\label{TCCAT2} 
$\cTC(\C)\leq \ccat(\C\times \C)$.
\end{cor}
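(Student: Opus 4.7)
The plan is to combine two results already established in the paper. By Theorem \ref{PROJECT}, the categorical complexity of $\C$ can be rewritten as $\cTC(\C) = \cD(p_1, p_2)$, where $p_1, p_2 \colon \C \times \C \to \C$ are the two projection functors. This reduces the corollary to bounding the categorical homotopic distance between these projections by $\ccat(\C \times \C)$.

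Next, I would apply Theorem \ref{CATDOM} directly to the functors $F = p_1$ and $G = p_2$, whose common domain is $\C \times \C$. That theorem gives the bound $\cD(F, G) \leq \ccat(\text{domain})$ for any pair of functors. Putting the two steps together yields
\[
\cTC(\C) = \cD(p_1, p_2) \leq \ccat(\C \times \C),
\]
which is the desired inequality.

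There is essentially no obstacle here: the result is a one-line consequence of the identification of $\cTC$ with the homotopic distance between projections together with the general bound of $\cD$ by the LS-category of the domain. The only thing worth checking is that the product category $\C \times \C$ is connected, which follows from the standing assumption that $\C$ is connected, so that $\ccat(\C \times \C)$ is defined (possibly infinite), in which case the inequality holds trivially.
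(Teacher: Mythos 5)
Your proposal is correct and follows exactly the paper's own argument: identify $\cTC(\C)$ with $\cD(p_1,p_2)$ via Theorem \ref{PROJECT} and then apply Theorem \ref{CATDOM} to the projections with domain $\C\times\C$. The added remark about connectedness of $\C\times\C$ is a reasonable extra check but not needed beyond the paper's standing assumptions.
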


\begin{proof}In Theorem \ref{CATDOM} take the functors $p_1,p_2\colon \C\times \C \to \C$. Then
$\cTC(\C)=\cD(p_1,p_2)\leq \ccat(\C\times \C)$.
\end{proof}


\subsection{Triangle Inequality}

\begin{prop}\label{prop:triangle_inequality_general} Let $F,G,H\colon \C \to \Dc$ be functors between $\C$ and $\Dc$ such that $\ccat(X)\leq 2$. Then 
	$$\cD(F,H)\leq \cD(F,G)+\cD(G,H).$$
\end{prop}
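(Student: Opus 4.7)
The approach hinges on Theorem \ref{CATDOM}, which guarantees $\cD(F,H)\leq \ccat(\C)\leq 2$ for \emph{any} pair of functors $\C\to\Dc$. With that universal upper bound in hand, the triangle inequality reduces to a short case analysis on whether $\cD(F,G)$ and $\cD(G,H)$ vanish or not.

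First I would dispose of the degenerate cases. If $\cD(F,G)=0$, then by property \eqref{DOS} the functors $F$ and $G$ are homotopic, so the homotopy invariance in property \eqref{TRES} gives $\cD(F,H)=\cD(G,H)$, and the claimed inequality holds with equality. The symmetric case $\cD(G,H)=0$ is treated identically, swapping the roles of $F$ and $H$.

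In the remaining case, both $\cD(F,G)\geq 1$ and $\cD(G,H)\geq 1$, so their sum is at least $2$, and the chain
\[\cD(F,H)\;\leq\;\ccat(\C)\;\leq\;2\;\leq\;\cD(F,G)+\cD(G,H),\]
where the first inequality is Theorem \ref{CATDOM}, settles the result.

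The main conceptual obstacle—and the reason this statement is not formulated unconditionally—is that the triangle inequality is not at all transparent at the level of geometric covers: intersecting a cover of size $m+1$ witnessing $F\simeq G$ with one of size $n+1$ witnessing $G\simeq H$ yields only a cover of size $(m+1)(n+1)$ on whose pieces $F\simeq H$, which gives the much weaker product estimate in the spirit of Corollary \ref{cor:d_upper_bound_codomain}. The hypothesis $\ccat(\C)\leq 2$ is precisely what collapses this loss, since it forces $\cD(F,H)\leq 2$ and makes the nontrivial sub-case $\cD(F,G),\cD(G,H)\geq 1$ automatic, so no refinement of the intersected cover is needed.
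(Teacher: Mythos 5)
Your proof is correct and takes essentially the same route as the paper's: handle the degenerate cases where one of $\cD(F,G)$, $\cD(G,H)$ vanishes via homotopy invariance, and otherwise observe that the sum is at least $2$ while Theorem \ref{CATDOM} gives $\cD(F,H)\leq \ccat(\C)\leq 2$. Your case split (on whether the two right-hand distances vanish) is in fact a slightly cleaner bookkeeping than the paper's phrasing ``no pair of homotopic functors among $F,G,H$'', but the argument is the same.
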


\begin{proof}
	First, notice that if two of the three functors are homotopic, then the result holds automatically, so assume that there is no pair of homotopic functors among $F,G$ and $H$. Since $\D(F,H) \leq \ccat(\C)$ (Corollary \ref{CATDOM}), the result follows.
\end{proof}

We do not know whether Proposition \ref{prop:triangle_inequality_general} holds without assumptions on the categorical category of the source category.

\subsection{Invariance}
We now prove the homotopy invariance of the homotopic distance.

\begin{cor}\label{prop:invariance_of_distance_under_homotopies}
\begin{enumerate}
	\item\label{COR1} Let $F,G\colon \C\to \Dc$ be functors and let $\alpha\colon \Dc \to \Dc^\prime$ be a functor with a left homotopy inverse. Then 
	$$\cD(\alpha\circ F,\alpha\circ G)=\cD(F,G).$$
	\item\label{COR2} Let $F,G\colon \C\to \Dc$ be functors and let $\beta\colon \C^\prime \to \C$ be a functor with a right homotopy inverse. Then 
	$$\cD(F\circ \beta,G\circ \beta)=\cD(F,G).$$
\end{enumerate}
\end{cor}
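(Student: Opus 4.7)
The plan is to derive both statements as immediate consequences of Propositions \ref{IZQ} and \ref{DER} combined with Property \eqref{TRES} (invariance of $\cD$ under the homotopy class).

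For part \eqref{COR1}, let $\alpha'\colon \Dc'\to \Dc$ be a left homotopy inverse of $\alpha$, so $\alpha'\circ \alpha \simeq 1_{\Dc}$. One inequality, $\cD(\alpha\circ F,\alpha\circ G)\leq \cD(F,G)$, is immediate from Proposition \ref{IZQ}. For the reverse inequality, the strategy is: first note that $\alpha'\circ\alpha\circ F\simeq F$ and $\alpha'\circ\alpha\circ G\simeq G$ (since $\simeq$ behaves well with composition, as recalled in Section \ref{sec:cat_dis_between_funct}), so by Property \eqref{TRES} we have $\cD(F,G)=\cD(\alpha'\circ\alpha\circ F,\alpha'\circ\alpha\circ G)$; then apply Proposition \ref{IZQ} to the functor $\alpha'$ to bound this by $\cD(\alpha\circ F,\alpha\circ G)$. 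Chaining the two inequalities yields equality.

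For part \eqref{COR2} I would run the dual argument. Let $\beta'\colon \C\to \C'$ be a right homotopy inverse of $\beta$, so $\beta\circ \beta'\simeq 1_{\C}$. Proposition \ref{DER} gives $\cD(F\circ\beta,G\circ\beta)\leq \cD(F,G)$ directly. For the other direction, use that $F\circ\beta\circ\beta'\simeq F$ and $G\circ\beta\circ\beta'\simeq G$, apply Property \eqref{TRES} to rewrite $\cD(F,G)$ as $\cD(F\circ\beta\circ\beta',G\circ\beta\circ\beta')$, and then use Proposition \ref{DER} with the functor $\beta'$ to bound this by $\cD(F\circ\beta,G\circ\beta)$.

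No real obstacle is expected: the only subtlety is being attentive to sidedness (a \emph{left} homotopy inverse of $\alpha$ is what lets us cancel $\alpha$ on the \emph{left} after composing on the left, and symmetrically on the right for $\beta$), so that the compositions in the chain $F \simeq \alpha'\circ\alpha\circ F$ or $F\simeq F\circ\beta\circ\beta'$ actually make sense and produce the correct functor. Both parts then follow by two applications of the relevant composition inequality sandwiching a use of homotopy invariance.
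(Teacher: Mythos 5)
Your proposal is correct and follows essentially the same route as the paper: one inequality from Proposition \ref{IZQ} (resp.\ \ref{DER}), and the reverse inequality by composing with the homotopy inverse, applying the same proposition again, and invoking homotopy invariance of $\cD$. The paper writes this as a single chain of inequalities and dismisses part \eqref{COR2} as analogous, whereas you spell both parts out, but the argument is identical.
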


\begin{proof}
We prove \eqref{COR1} since \eqref{COR2} is analogous. By Proposition \ref{IZQ}, 
$$\cD(F,G)\geq \cD(\alpha \circ F,\alpha  \circ G)\geq \cD(\beta \circ \alpha \circ  F,\beta \circ \alpha \circ G).$$
But $\beta \circ \alpha \simeq 1_\Dc$ implies $\beta \circ \alpha \circ F \simeq F$ and $\beta \circ \alpha \circ G\simeq G$, hence
$\cD(\beta \circ \alpha  \circ F,\beta \circ \alpha \circ  G)=\cD(F,G)$ because the distance only depends on  the homotopy class.
\end{proof} 

\begin{prop}
	Assume $\alpha \colon \C \to \C'$ and  $\beta \colon \Dc \to \Dc'$ are homotopy equivalences between small categories,  connecting the functors $F\colon \C \to \Dc$ (resp. $G$) and $F^\prime\colon \C^\prime \to \Dc^\prime$ (resp. $G^\prime)$, that is, the following diagram is commutative:
	$$
\begin{tikzcd}
	\C \arrow[r,shift left, "F"] \arrow[r, shift right,"G"']   \arrow[d, "\alpha"] & \Dc \arrow[d, "\beta"] \\
	\C^\prime \arrow[r,shift left, "F^\prime"]\arrow[r,shift right,"G^\prime"'] &   \Dc^\prime
	\end{tikzcd}
$$
	Then $\cD(F,G)=\cD(F^\prime,G^\prime)$. 
\end{prop}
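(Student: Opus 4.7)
The plan is to apply Corollary \ref{prop:invariance_of_distance_under_homotopies} twice, once on the domain side and once on the codomain side, using the (homotopy-)commutative diagram as the bridge between the two applications.

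First, I would observe that since $\beta$ is a homotopy equivalence, it admits both a left and a right homotopy inverse (take the same quasi-inverse in both roles). In particular, by part \eqref{COR1} of Corollary \ref{prop:invariance_of_distance_under_homotopies},
\[
\cD(F,G)=\cD(\beta\circ F,\beta\circ G).
\]
Similarly, $\alpha$ admits a right homotopy inverse, so by part \eqref{COR2},
\[
\cD(F'\circ\alpha,G'\circ\alpha)=\cD(F',G').
\]

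Second, I would use the hypothesis that the diagram commutes, i.e.\ $\beta\circ F\simeq F'\circ\alpha$ and $\beta\circ G\simeq G'\circ\alpha$. By property \eqref{TRES} of the categorical homotopic distance (invariance under homotopy of the arguments), we get
\[
\cD(\beta\circ F,\beta\circ G)=\cD(F'\circ\alpha,G'\circ\alpha).
\]
Chaining the three equalities yields $\cD(F,G)=\cD(F',G')$.

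There is no real obstacle here; the only mild subtlety is to make sure the commutativity of the diagram is taken in the homotopical sense, so that property \eqref{TRES} can be invoked. (If the diagram commuted only strictly, the argument would be even simpler.) The proof is essentially a two-line chain of equalities once Corollary \ref{prop:invariance_of_distance_under_homotopies} and the homotopy invariance of $\cD$ in each slot are in hand.
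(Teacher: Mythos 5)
Your proof is correct and follows essentially the same route as the paper: both arguments apply Corollary \ref{prop:invariance_of_distance_under_homotopies} once on the domain side (pre-composition with $\alpha$) and once on the codomain side, and then use the commutativity of the square together with homotopy invariance of $\cD$ to chain the equalities. The only cosmetic difference is that the paper post-composes with the homotopy inverse $\beta'$ of $\beta$ rather than with $\beta$ itself; your version is, if anything, slightly cleaner.
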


\begin{proof}
	We denote the homotopic inverse of $\beta$ by $\beta'$. Then from Corollary \ref{prop:invariance_of_distance_under_homotopies} it follows:
	$$\cD(F,G)=\cD(F\circ \alpha,G\circ \alpha)=\cD(\beta' \circ F\circ \alpha,\beta' \circ G\circ \alpha). \qedhere$$
\end{proof}

\begin{cor}
	$\ccat(X)$ and $\cTC(X)$ are homotopy invariant.
\end{cor}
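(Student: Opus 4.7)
The plan is to deduce both invariance statements as immediate consequences of the preceding Proposition, applied to well-chosen commutative diagrams.

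For $\ccat$, given a homotopy equivalence $\alpha\colon \C\to \C'$, recall that $\ccat(\C)=\cD(1_{\C},c_0)$ for any constant functor $c_0$. I would pick a base object $c_0 \in \C$ and set $c_0' := \alpha(c_0) \in \C'$. Then I would apply the Proposition with $\beta = \alpha$, $(F,G) = (1_{\C}, c_0)$ and $(F',G')=(1_{\C'}, c_0')$. The relation $1_{\C'}\circ \alpha = \alpha\circ 1_{\C}$ is trivial, and $c_0'\circ \alpha = \alpha \circ c_0$ holds because both sides are the constant functor onto $\alpha(c_0)$. The Proposition then gives $\ccat(\C) = \cD(1_{\C},c_0) = \cD(1_{\C'},c_0') = \ccat(\C')$.

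For $\cTC$, the strategy is identical, using Theorem \ref{PROJECT} to rewrite $\cTC(\C) = \cD(p_1,p_2)$. I would first observe that the product of homotopy equivalences is again a homotopy equivalence, so that $\alpha\times \alpha\colon \C\times\C \to \C'\times \C'$ qualifies whenever $\alpha$ does. Then I would apply the Proposition to the diagram whose vertical arrows are $\alpha\times \alpha$ on the left and $\alpha$ on the right, with $p_1,p_2\colon \C\times \C\to \C$ on top and $p_1',p_2'\colon \C'\times \C'\to \C'$ on the bottom. The commutativity $p_i' \circ (\alpha\times \alpha) = \alpha\circ p_i$ for $i=1,2$ holds by the very definition of the product projections. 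The Proposition then yields $\cTC(\C)=\cD(p_1,p_2)=\cD(p_1',p_2')=\cTC(\C')$.

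The only (minor) obstacle is the stability of homotopy equivalences under products used in the $\cTC$ argument. This is purely a bookkeeping check: a natural transformation between functors $\C \to \C$ paired with a natural transformation between functors $\C \to \C$ yields a natural transformation between the corresponding product functors $\C\times \C \to \C\times \C$. This suffices to propagate the chains of natural transformations witnessing $\alpha'\circ \alpha \simeq 1_{\C}$ and $\alpha\circ \alpha' \simeq 1_{\C'}$ to the analogous chains witnessing $(\alpha'\times \alpha')\circ(\alpha\times \alpha) \simeq 1_{\C\times \C}$ and its counterpart, using the fact that the homotopy relation behaves well with respect to compositions (as recalled right after the second definition of homotopy between functors).
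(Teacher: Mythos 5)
Your proof is correct and takes essentially the approach the paper intends: the corollary is stated as an immediate consequence of the preceding proposition, and you instantiate that proposition exactly as expected (with $1_{\C},c_0$ for $\ccat$ and with the projections $p_1,p_2$ for $\cTC$), filling in the commutativity checks and the fact that a product of homotopy equivalences is a homotopy equivalence, which the paper leaves implicit.
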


Note that Corollary \ref{prop:invariance_of_distance_under_homotopies} generalizes the homotopy invariance of $\ccat$ stated by Tanaka in \cite{Tanaka3}.

\subsection{Products}
We study now the behaviour of the categorical homotopic distance under products.

\begin{theorem}\label{DISTPRODUCT}
 Given $F,G\colon \C\to \Dc$ and $F',G'\colon \C'\to \Dc'$, it is  
 $$\D(F\times F',G\times G')+1\leq \big(\D(F,G)+1\big) \cdot \big(\D(F',G')+1\big).$$
\end{theorem}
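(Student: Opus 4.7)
The plan is to build a geometric cover of $\C\times\C'$ of size $(\cD(F,G)+1)(\cD(F',G')+1)$ by taking componentwise products of the covers witnessing the two distances, and then to verify that the product functors become homotopic when restricted to each product piece. I may assume both $\cD(F,G)$ and $\cD(F',G')$ are finite, since otherwise the right-hand side is infinite and the inequality is vacuous. Writing $n=\cD(F,G)$ and $n'=\cD(F',G')$, I would fix geometric covers $\{\U_0,\dots,\U_n\}$ of $\C$ and $\{\U'_0,\dots,\U'_{n'}\}$ of $\C'$ such that $F_{\vert \U_i}\simeq G_{\vert \U_i}$ and $F'_{\vert \U'_j}\simeq G'_{\vert \U'_j}$ for all $i,j$.

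The first substantive step is to check that the family of product subcategories $W_{i,j}=\U_i\times \U'_j\subseteq \C\times\C'$, of cardinality $(n+1)(n'+1)$, is a geometric cover. A composable sequence of arrows in $\C\times\C'$ has the form $(f_1,f'_1),\dots,(f_k,f'_k)$ with $f_1,\dots,f_k$ composable in $\C$ and $f'_1,\dots,f'_k$ composable in $\C'$; applying the geometric cover hypothesis in each factor yields indices $i,j$ with every $f_\ell\in \U_i$ and every $f'_\ell\in \U'_j$, so the whole pair sequence lies in $W_{i,j}$.

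The second step is to exhibit, on each $W_{i,j}$, a homotopy $(F\times F')_{\vert W_{i,j}}\simeq (G\times G')_{\vert W_{i,j}}$. Starting from homotopies $H\co \U_i\times\I_m\to \Dc$ from $F_{\vert \U_i}$ to $G_{\vert \U_i}$ and $H'\co \U'_j\times\I_{m'}\to \Dc'$ from $F'_{\vert \U'_j}$ to $G'_{\vert \U'_j}$, I would concatenate two one-sided product deformations: first vary the first coordinate through $H$ while freezing the second coordinate at $F'_{\vert \U'_j}$, then vary the second coordinate through $H'$ while freezing the first coordinate at $G_{\vert \U_i}$. Assembling these two stages yields a functor $W_{i,j}\times \I_{m+m'}\to \Dc\times\Dc'$ of the required form, and combined with the cover bound this gives $\cD(F\times F',G\times G')+1\leq (n+1)(n'+1)$, which is the claim.

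The main obstacle I anticipate is the bookkeeping in the last step: one must realise each one-sided deformation as an honest functor out of $W_{i,j}\times\I_m$ or $W_{i,j}\times\I_{m'}$, which amounts to pairing the composite of $H$ with the projection $W_{i,j}\to \U_i$ together with a constant functor into $\Dc'$ in the other slot, and then invoking the fact, recalled at the beginning of Section \ref{sec:cat_dis_between_funct}, that homotopies of functors may be concatenated. None of this is difficult in isolation, but it is the only place where one could easily slip up about lengths of interval categories or about the compatibility of restriction with products.
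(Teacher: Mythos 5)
Your proposal is correct and follows essentially the same route as the paper: the paper's proof also takes the products $\U_i\times \mathcal{V}_j$ of the two witnessing covers and asserts (leaving the verification to the reader) that they form a geometric cover on which $F\times F'$ and $G\times G'$ are homotopic. Your two verification steps — checking the geometric cover condition componentwise and concatenating the two one-sided product homotopies — are exactly the details the paper's ``it can be checked'' elides, and they go through as you describe.
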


\begin{proof}
Given geometric coverings $\{\U_0,\ldots,\U_m\}$ and $\{\mathcal{V}_0,\ldots,\mathcal{V}_n\}$ of $\C$ and $\C'$, respectively,  such that $F_{\vert \U_i}\simeq G_{\vert \U_i}$ and $F'_{\vert \mathcal{V}_j}\simeq G'_{\vert \mathcal{V}_j}$, then it can be checked that $\{\U_i\times \mathcal{V}_j\}$ is a geometric cover of $\C\times \C'$ such that $F\times F'_{\vert \U_i\times \mathcal{V}_j}\simeq G\times G'_{\vert \U_i\times \mathcal{V}_j}$.
\end{proof}

\begin{example}\label{PRODLS}
    Set $F\colon \C\to \C$ and $F'\colon \C'\to \C'$ to be the identity functors and $G\colon \C\to \C$ and $G'\colon \C'\to \C'$ to be constant functors. Then $$\ccat(\C\times \C')+1 \leq (\ccat(\C)+1) \cdot (\ccat(\C')+1).$$
    Hence, Theorem \ref{DISTPRODUCT} generalizes the product inequality proved by Tanaka \cite{Tanaka3} for the categorical LS-category.
\end{example}

\begin{example}\label{PRODTC}
    Set $F\colon \C \times \C\to \C$ and $F'\colon \C' \times \C' \to \C'$ to be the projection functors onto the first factor and $G\colon \C \times \C\to \C$ and $G'\colon \C' \times \C' \to \C'$ to be the projection functors onto the second factor. Then $$\cTC(\C\times \C')\leq (\cTC(\C)+1) \cdot (\cTC(\C')+1)-1.$$
\end{example}

\subsection{Relationship between homotopic distances}

Ordinary homotopic distance between continuous maps and the two notions of categorical homotopic distance that we have defined so far are related by the following result:

\begin{prop}\label{prop:inequalities_homotopic_distances}
	Given two functors $F,G\colon \C\to \Dc$, then $$\D(\B F, \B G)\leq \wcD(F,G) \leq \cD(F,G).$$
\end{prop}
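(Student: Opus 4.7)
The plan is to prove the two inequalities separately, exploiting the fact that the classifying space functor $\B$ turns geometric covers of $\C$ into CW-subcomplex covers of $\B\C$ (by Proposition \ref{prop:condition_being_geometric_cover}) and preserves the homotopy relation between functors.

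For the upper inequality $\wcD(F,G)\leq \cD(F,G)$, suppose $\cD(F,G) = n$ and let $\{\U_0,\dots,\U_n\}$ be a geometric cover of $\C$ with $F_{|\U_j}\simeq G_{|\U_j}$ for every $j$. Since the classifying space functor preserves homotopies (as recalled just before Example \ref{ex:poset_revirado_numeros_naturales}), we have
\[
\B F_{|\B\U_j}=\B(F_{|\U_j})\simeq \B(G_{|\U_j})=\B G_{|\B\U_j}
\]
for every $j$, so the same cover witnesses $\wcD(F,G)\leq n$.

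For the lower inequality $\D(\B F,\B G)\leq \wcD(F,G)$, suppose $\wcD(F,G) = n$ and let $\{\U_0,\dots,\U_n\}$ be a geometric cover of $\C$ with $\B F_{|\B\U_j}\simeq \B G_{|\B\U_j}$ for every $j$. By Proposition \ref{prop:condition_being_geometric_cover}, the collection $\{\B\U_j\}$ covers $\B\C$ by subcomplexes. To feed this into the definition of topological homotopic distance (which is stated in \cite{QuiDa} in terms of open covers), I would thicken each subcomplex $\B\U_j$ to an open neighborhood $V_j\supset \B\U_j$ that deformation retracts onto $\B\U_j$; such neighborhoods exist because the inclusion of a subcomplex into a CW-complex is a cofibration and hence a neighborhood deformation retract. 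The deformation retraction lets us transfer the homotopy $\B F_{|\B\U_j}\simeq \B G_{|\B\U_j}$ to a homotopy $\B F_{|V_j}\simeq \B G_{|V_j}$, and the family $\{V_j\}$ is then an open cover of $\B\C$ witnessing $\D(\B F,\B G)\leq n$.

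The main obstacle is the second inequality, and more precisely the passage from a cover by CW-subcomplexes to an open cover with the same homotopy property: one must check that the ambient open neighborhoods $V_j$ really do inherit the homotopy between $\B F$ and $\B G$. This is a standard but slightly delicate CW-argument; the first inequality, by contrast, follows immediately from functoriality of $\B$ and its compatibility with natural transformations.
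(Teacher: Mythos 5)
Your proposal is correct and follows essentially the same route as the paper: the upper inequality via the fact that the classifying space functor preserves homotopies, and the lower inequality by thickening each subcomplex $\B\U_j$ to an open neighborhood of which it is a deformation retract and transferring the homotopy $\B F_{\vert \B\U_j}\simeq \B G_{\vert \B\U_j}$ along the retraction. The paper phrases the last step as invariance of the homotopic distance under homotopy equivalences (citing \cite{QuiDa}), but this is the same argument you spell out explicitly.
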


\begin{proof}
It is well known that given any subcomplex $Y$ of a CW-complex $X$, there exists an open neighborhood $U$ of $Y$ in $X$ such that $Y$ is a deformation retract of $U$ \cite{Hatcher}. Since deformation retracts are homotopy equivalences and the homotopic distance is invariant under homotopies \cite{QuiDa}, by Proposition \ref{prop:condition_being_geometric_cover} we have $\D(\B F, \B G)\leq \wcD(F,G)$. The fact that the classifying space functor preserves homotopies guarantees the inequality $\wcD(F,G) \leq \cD(F,G).$
\end{proof}

\begin{remark}
The difference between the categorical homotopic distance and the weak categorical homotopic distance can be arbitrarily large, as Example \ref{ex:poset_revirado_numeros_naturales} illustrates.
\end{remark}


\section{The context of posets}\label{sec:Posets}

As we explain below, a finite poset can be seen both as a small category and as a finite topological space. In tis way, order preserving maps between them can be seen both as functors and as continuous maps. Therefore, given two functors between posets $F,G\colon P \to Q$, it makes sense to study both their homotopic distance as continuous maps and their categorical homotopic distance as functors. We devote this section to the study of homotopic distance between order preserving maps.   For a more detailed exposition of the preliminaries on finite topological spaces we refer the reader to \cite{Barmak_book,Raptis,Stong}.


\subsection{Generalities on finite spaces and posets} From now on all posets are assumed to be finite. Recall that a poset $P$ can be seen as a small category where there is an arrow from the element $x$ to the element $y$ if and only if $x\leq y$.  Finite posets and finite topological spaces are in bijective correspondence.  If $(P, \leq)$ is a poset, a basis of a topology on $P$ is given by taking, for each $y\in P$ the set $$U_y:=\{x\in P\colon x\leq y\}.$$   
Conversely, if $X$ is a finite $T_0$-space, define, for each $x\in X$, the {\it minimal open set} $U_x$ as the intersection of all open sets containing $x$.  Then $X$ may be given a poset structure by defining $x\leq y$ if and only if $U_x\subset U_y$.  Moreover, functors (order preserving maps) between posets (seen as categories) are just the continuous maps between the associated topological spaces and the notion of homotopy between functors coincides with the topological notion of homotopy \cite{Raptis}. From now on, we will use the notions of poset and finite spaces interchangeably, and the same applies to functor, order preserving map and continuous map.

Given a poset $P$, we can consider the poset with the opposite order $P^{\op}$, which is the opposite category. Then,  the subposet $$F_x:=\{y\in P: y\geq x\}$$ of $P$ coincides with the subsposet $U_x$ of $P^{\op}$.

Given a poset $P$, its {\it order complex $\mathcal{K}(P)$} is the abstract simplicial complex whose simplices are  the non-empty chains of $P$. We say that the poset $P$ is a  {\it model} for the topological space $Y$ if the geometric realization $\vert \mathcal{K}(P) \vert$ of the simplicial complex  $\mathcal{K}(P)$ is homotopy equivalent to $Y$. 
Conversely, if $K$ is a simplicial complex, we associate a poset $\chi(K)$ to $K$ via the McCord functor $\chi$ \cite{McCord} where $\chi(K)=\{\sigma : \sigma \in K\}$ and $\sigma\leq \tau$ if and only if $\sigma$ is a face of $\tau$. 

We recall a classic result \cite{Barmak_book}:

\begin{lemma}\label{lema:contiguous_maps_in_posets_go_to_same_contiguity_class}
Given two homotopic continuous maps between posets $F,G\colon P\to P'$, then the simplicial maps $\kappa(F)$ and $\kappa(G)$ are in the same contiguity class. Conversely, let $\varphi,\phi \colon K\to L$ be simplicial maps which lie in the same contiguity class. Then $\chi(\varphi)\simeq \chi(\phi)\colon \chi(K)\to \chi(L)$.
\end{lemma}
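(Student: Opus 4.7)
The plan is to handle the two directions separately, using in both cases the fundamental fact that for order preserving maps $F,G\co P\to P'$, a natural transformation $F\Rightarrow G$ is precisely the pointwise inequality $F(x)\leq G(x)$ for every $x\in P$, so by the zigzag definition of homotopy it suffices to reduce each direction to the case of a single such inequality.

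For the first direction, suppose $F\leq G$ pointwise. Given a non-empty chain $x_0<x_1<\cdots<x_n$ in $P$ (that is, a simplex of $\mathcal{K}(P)$), order preservation yields the chains $F(x_0)\leq\cdots\leq F(x_n)$ and $G(x_0)\leq\cdots\leq G(x_n)$, and moreover $F(x_i)\leq G(x_i)$ for all $i$. It follows that the union $\mathcal{K}(F)(\sigma)\cup\mathcal{K}(G)(\sigma)=\{F(x_0),\dots,F(x_n),G(x_0),\dots,G(x_n)\}$ is totally ordered, hence a simplex of $\mathcal{K}(P')$. This is the definition of $\mathcal{K}(F)$ and $\mathcal{K}(G)$ being contiguous. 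Iterating along the zigzag of natural transformations witnessing $F\simeq G$ gives the required chain of contiguous maps.

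For the converse, it suffices to treat a single pair of contiguous simplicial maps $\varphi,\phi\co K\to L$, since a general contiguity class is a finite sequence of such pairs and the homotopy relation between functors is transitive. Define $\psi\co \chi(K)\to\chi(L)$ on objects by $\psi(\sigma)=\varphi(\sigma)\cup\phi(\sigma)$, which belongs to $L$ by contiguity. If $\sigma\subseteq\tau$ in $\chi(K)$, then $\varphi(\sigma)\subseteq\varphi(\tau)$ and $\phi(\sigma)\subseteq\phi(\tau)$, so $\psi(\sigma)\subseteq\psi(\tau)$; thus $\psi$ is order preserving. Since $\chi(\varphi)(\sigma)=\varphi(\sigma)\subseteq\psi(\sigma)$ and analogously for $\phi$, we obtain natural transformations $\chi(\varphi)\Rightarrow\psi\Leftarrow\chi(\phi)$, hence $\chi(\varphi)\simeq\chi(\phi)$.

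The argument is essentially bookkeeping once the dictionary \emph{natural transformation $\leftrightarrow$ pointwise order} is in place; the only mildly subtle point is choosing the intermediate functor $\psi$ in the converse so that both $\chi(\varphi)$ and $\chi(\phi)$ admit a natural transformation \emph{into} $\psi$ (rather than trying to build a direct natural transformation between them, which need not exist). This is the step I would expect to be the main obstacle for a reader who has not seen the trick before, but it is standard and can simply be invoked from \cite{Barmak_book}.
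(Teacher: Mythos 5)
Your converse direction is correct and is the standard argument: the auxiliary order preserving map $\psi(\sigma)=\varphi(\sigma)\cup\phi(\sigma)$ receives natural transformations from both $\chi(\varphi)$ and $\chi(\phi)$, and concatenating over the finite sequence of contiguous pairs finishes that half. The forward direction, however, contains a genuine error. A pointwise inequality $F\leq G$ does \emph{not} force $\kappa(F)(\sigma)\cup\kappa(G)(\sigma)$ to be totally ordered: for a chain $x_0<\cdots<x_n$ you get comparability of $F(x_i)$ with $G(x_j)$ only when $i\leq j$ (via $F(x_i)\leq F(x_j)\leq G(x_j)$); when $i>j$ there is no reason for $F(x_i)$ and $G(x_j)$ to be comparable. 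Concretely, let $P=\{a<b\}$ and let $P'$ be the four-element poset with $0<1<3$ and $0<2<3$, where $1$ and $2$ are incomparable; set $F(a)=0$, $F(b)=1$, $G(a)=2$, $G(b)=3$. Both maps are order preserving and $F\leq G$, yet $\kappa(F)(\{a,b\})\cup\kappa(G)(\{a,b\})=\{0,1,2,3\}$ is not a chain, so $\kappa(F)$ and $\kappa(G)$ are not contiguous. (They do lie in the same contiguity class, via the intermediate map $H(a)=0$, $H(b)=3$.)

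The statement is still true, but a single natural transformation only gives membership in the same contiguity class, and one must interpolate further to prove it. The standard repair, which is what \cite{Barmak_book} does, is to fix a linear extension $x_1,\dots,x_N$ of $P$ and change $F$ into $G$ one point at a time from the top: put $F_k(x_i)=G(x_i)$ for $i>N-k$ and $F_k(x_i)=F(x_i)$ otherwise. Each $F_k$ is order preserving, $F_0=F$, $F_N=G$, and consecutive maps differ at the single point $x=x_{N-k}$; for any chain $C$ containing $x$, every element of $F_k(C)$ is comparable with $G(x)$ (elements below $x$ map under $F$ to something $\leq F(x)\leq G(x)$, elements above $x$ have already been switched to $G$ and map to something $\geq G(x)$), so $\kappa(F_k)$ and $\kappa(F_{k+1})$ really are contiguous. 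For reference, the paper itself gives no proof of this lemma and simply cites \cite{Barmak_book}; invoking that source, as you suggest at the end, would be legitimate, but the argument you actually wrote for the forward direction does not work as stated.
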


\subsection{Homotopy equivalences in posets}

Stong \cite{Stong} showed that for any given finite poset $P$ the exists a unique subposet (up to isomorphism) $P'\subset P$, called the {\em core of $P$},  satisfying the following two conditions: 
\begin{itemize}
\item
first, $P^\prime$ is a deformation retract of $P$;
\item
second, no proper subposet of $P'$ is a deformation retract of $P$. 
\end{itemize} 
Under these circumstances $P'$ is called a   {\em minimal} poset.  

As a consequence of the homotopy invariance of the distance, it follows that in order to compute the (categorical) homotopic distance between functors $F,G\colon P\to Q$, where $P$ and $Q$ are posets, it is enough to study the (categorical) homotopic distance between the associated functors $F^\prime, G^\prime\colon P^\prime \to Q^\prime$ between the cores.

\begin{corollary}
Given two functors $F,G\co P\to Q$ between two finite posets $P,Q$, let   $P^\prime$ (respectively, $Q^\prime$) the core of $P$ (resp., of $Q$). Denote by $F',G^\prime\colon P^\prime \to Q^\prime$   the compositions of $F$ and $G$ with the equivalences $P\simeq P^\prime$ and $Q\simeq Q^\prime$.  Then:
$$\cD(F,G)=\cD(F',G')$$ and $$\D(F,G)=\D(F',G').$$
\end{corollary}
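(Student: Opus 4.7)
The plan is to reduce the statement to two applications of the homotopy invariance already established in Corollary \ref{prop:invariance_of_distance_under_homotopies}, one for pre-composition with the inclusion of the core and one for post-composition with the retraction onto the core.

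More precisely, I would first fix notation for the core data: let $i_P\colon P^\prime\hookrightarrow P$ and $r_P\colon P\to P^\prime$ be the inclusion of the core and its associated retraction, satisfying $r_P\circ i_P=1_{P^\prime}$ and $i_P\circ r_P\simeq 1_P$, and analogously define $i_Q,r_Q$ for $Q$. Then the natural way to interpret the ``composition of $F$ with the equivalences'' is to set $F^\prime:=r_Q\circ F\circ i_P$ and $G^\prime:=r_Q\circ G\circ i_P$, which are indeed functors $P^\prime\to Q^\prime$.

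Next, I would observe that $i_P$ has a right homotopy inverse, namely $r_P$, since $i_P\circ r_P\simeq 1_P$; and $r_Q$ has a left homotopy inverse, namely $i_Q$, since $i_Q\circ r_Q\simeq 1_Q$. Applying part \eqref{COR2} of Corollary \ref{prop:invariance_of_distance_under_homotopies} to $\beta=i_P$ and then part \eqref{COR1} to $\alpha=r_Q$, we obtain the chain of equalities
\[
\cD(F,G)=\cD(F\circ i_P,G\circ i_P)=\cD(r_Q\circ F\circ i_P,r_Q\circ G\circ i_P)=\cD(F^\prime,G^\prime),
\]
which yields the first assertion.

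For the second assertion, concerning the ordinary homotopic distance $\D$ between the associated continuous maps on finite topological spaces, I would invoke the analogous homotopy invariance of $\D$ for topological spaces, proved by the authors in \cite{QuiDa}. Since the inclusion of the core of a finite poset is a topological homotopy equivalence (via the same maps $i_P,r_P,i_Q,r_Q$ viewed as continuous maps), the exact same two-step argument applies. The only step that requires a brief justification is that the homotopies $i_P\circ r_P\simeq 1_P$ and $i_Q\circ r_Q\simeq 1_Q$ considered as homotopies of functors coincide with the corresponding topological homotopies, but this is immediate from the equivalence of the two notions of homotopy in the poset/finite space setting recalled at the start of the section. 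I do not anticipate a genuine obstacle here; the whole argument is a formal consequence of the invariance corollary together with Stong's deformation retraction property of the core.
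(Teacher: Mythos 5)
Your proof is correct and follows exactly the route the paper intends: the paper gives no explicit proof beyond the remark that the corollary is ``a consequence of the homotopy invariance of the distance,'' and your argument is precisely the fleshed-out version of that, applying both parts of Corollary \ref{prop:invariance_of_distance_under_homotopies} to the core inclusion and retraction (and the topological analogue from \cite{QuiDa} for $\D$). The directions of the homotopy inverses are checked correctly, so there is nothing to add.
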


Therefore, from now on, we can restrict our attention to minimal spaces.

\subsection{Coverings and bounds} We begin  with a lemma which relates the notions of geometric cover and open cover of posets. 

\begin{lema}\label{lemma:open_is_geometric}
	If $P$ is a finite poset and $\{U_0,\dots,U_m\}$ is an open cover of $U$, then  $\{U_0,\dots,U_m\}$ is also a geometric cover.
\end{lema}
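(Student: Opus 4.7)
The plan is to unwind the two definitions and observe that the conclusion is essentially immediate from the fact that, in a poset equipped with its Alexandrov topology, the open sets are precisely the down-sets.

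First, I would recall the description of the topology: the minimal open neighborhood of an element $x\in P$ is $U_x=\{z\in P\colon z\leq x\}$, so a subset $V\subseteq P$ is open if and only if $z\leq x$ and $x\in V$ imply $z\in V$. In particular, each member $U_i$ of the given open cover is a down-set. Viewing $P$ as a small category (one arrow $x\to y$ for each comparison $x\leq y$), I would interpret each $U_i$ as the full subcategory on its underlying set of objects, which is the natural way to regard a subset of $P$ as a subcategory.

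Next, I would translate the geometric cover condition: a composable sequence of arrows $f_1,\dots,f_n$ in $P$ corresponds exactly to a chain $x_0\leq x_1\leq\cdots\leq x_n$ in $P$, with $f_j\colon x_{j-1}\to x_j$. To verify that $\{U_0,\dots,U_m\}$ is a geometric cover, it suffices to exhibit an index $i$ such that $\{x_0,\dots,x_n\}\subseteq U_i$, since then all the arrows $f_j$ automatically lie in the full subcategory $U_i$.

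The key step is to pick this index by looking at the maximum of the chain. Since $\{U_0,\dots,U_m\}$ covers $P$ as a set, there exists some $i$ with $x_n\in U_i$. Because $U_i$ is a down-set and $x_j\leq x_n$ for every $j\in\{0,\dots,n\}$, we obtain $x_j\in U_i$ for all $j$, which completes the argument. There is no real obstacle here: the only subtlety worth flagging explicitly is the convention that an open subset is promoted to a full subcategory, and the only ingredient beyond the definitions is the elementary characterization of open sets as down-sets in the Alexandrov topology.
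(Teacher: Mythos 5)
Your argument is correct and matches the paper's own proof: both take a composable chain $x_0\leq\cdots\leq x_n$, choose a cover element $U_k$ containing the top element $x_n$, and use the fact that open sets in the Alexandrov topology are down-sets (equivalently, $U_{x_n}\subset U_k$) to conclude the whole chain lies in $U_k$. Your additional remarks about full subcategories and the down-set characterization only make explicit what the paper leaves implicit.
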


\begin{proof}
	Let $x_0\leq \cdots \leq x_n$ be a sequence of composable arrows in $P$. Since   there is a $U_k$ such that $x_n\in U_k$, by definition of the open sets in $P$ it is $U_{x_n}\subset U_k$, so  $x_0, \dots ,x_n\in U_k$.
\end{proof}

The following results help us to implement the computation of the (categorical) homotopic distance when the domain is a finite poset by reducing the open coverings we have to test.
 
\begin{proposition}\label{prop:form_of_coverings_D}
Given two finite posets $P$ and $Q$ and two functors $F,G\co P\to Q$, in order to compute $\D(F,G)$, which is finite, it is enough to study open coverings $\{U_i\}$ whose elements are of the form $U_i=U_{x_0}\cup \cdots \cup U_{x_{i_n}}$, where the $x_{k}$ are maximal elements (with respect to the order relation in $P$). 
\end{proposition}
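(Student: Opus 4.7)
My plan is to handle the two assertions in turn. First, to establish finiteness of $\D(F,G)$, I would exhibit the open cover $\{U_x\}_{x\in P}$ by minimal open sets. Each $U_x$ has $x$ as its maximum, i.e.\ as terminal object in the associated category, so by Proposition \ref{prop:adjoints_are_homotopy_equiv} it is contractible as a finite topological space. Consequently both $F_{|U_x}$ and $G_{|U_x}$ are null-homotopic, hence homotopic to each other, which gives the a priori bound $\D(F,G)\le|P|-1<\infty$.

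For the shape of coverings, I would start from an arbitrary open cover $\{V_0,\ldots,V_n\}$ realising $\D(F,G)=n$, and ``prune'' each $V_j$ down to a union of minimal opens indexed by maximal elements of $P$. Concretely, for every $x\in\max(P)$ I choose an index $j(x)$ with $x\in V_{j(x)}$; since $U_x$ is the smallest open set containing $x$, this forces $U_x\subseteq V_{j(x)}$. I then set
\[
U_j \;=\; \bigcup_{\substack{x\in\max(P)\\ j(x)=j}} U_x,
\]
which is open, satisfies $U_j\subseteq V_j$, and has exactly the prescribed form.

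Two routine verifications complete the argument. First, the new collection still covers $P$: every element of a finite poset lies below some maximal element, so each $p\in P$ belongs to some $U_x$ with $x\in\max(P)$, and hence to $U_{j(x)}$. Second, restricting any homotopy from $F_{|V_j}$ to $G_{|V_j}$ to the open subspace $U_j\subseteq V_j$ yields $F_{|U_j}\simeq G_{|U_j}$, so the pruned covering witnesses the same value of $\D(F,G)$. The only delicate point is the insistence on \emph{maximal} (rather than arbitrary) elements as indices, since this is precisely what guarantees that the shrunken family still exhausts $P$; all other steps are pure bookkeeping in the basis $\{U_x\}_{x\in P}$ of a finite $T_0$-space.
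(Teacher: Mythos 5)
Your proof is correct and follows essentially the same route as the paper: contractibility of the minimal open sets $U_x$ gives finiteness, and an arbitrary optimal cover $\{V_j\}$ is pruned to unions of minimal opens of maximal elements contained in the $V_j$, using that restricting a homotopy to an open subspace preserves the homotopy. The only cosmetic difference is that you assign each maximal element to a single index $j(x)$ whereas the paper collects all maximal elements lying in each $V_j$; both yield the same bound.
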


\begin{proof}
	Given $x_k\in P$, the basic open subset $U_{x_k}$ is contractible (Proposition \ref{prop_terminal_initial_object_implies_zero_homotopic_distance}). Hence, $F_{\vert U_{x_k}}\simeq G_{\vert U_{x_k}}$. Therefore it is clear that an open cover $\U=\{U_i\}$ whose elements are of the form $U_i=U_{x_0}\cup \cdots \cup U_{x_{i_n}}$ where the $x_{k}$ are maximal elements  provides an open cover for studying $\D(F,G)$ and since the poset $P$ is finite, so is the cover and $\D(F,G)$. Now, we will prove that it is enough to study such coverings. Given an open cover  $\{V_i\}_{i=0}^n$ such that $F_{\vert V_i}\simeq G_{\vert V_i}$ we will obtain a cover formed by unions of maximal basic open sets with at most $n+1$ elements. Suppose that $V_i=\{x_1,\ldots,x_k\}$. Among the elements of $V_i$ pick the ones with are maximal elements of $P$, assume they are $\{x_{i_0},\ldots, x_{i_l}\}$, then define $U_i=U_{x_{i_0}}\cup \cdots \cup U_{x_{i_l}}$. Note that $U_i\subset V_i$. It can be checked that the covering constructed by this procedure $\{U_i\}$ satisfies that $F_{\vert U_i}\simeq G_{\vert U_i}$ and it is a cover of the required form.
\end{proof}

The proof of the following result is similar.

\begin{proposition}
Given two finite posets $P$ and $Q$ and two functors $F,G\co P\to Q$,  in order to compute $\cD(F,G)$, which is finite, it is enough to study geometric coverings $\{U_i\}$ whose elements are of the form $U_i=C_{i_0}\cup \cdots \cup C_{i_n}$, where the $C_{i_k}$ are maximal chains. 
\end{proposition}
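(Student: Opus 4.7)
The plan is to imitate the proof of the previous proposition, replacing basic open sets at maximal elements with maximal chains of $P$. The key observation is that any maximal chain $C$ of $P$, viewed as a (full) subcategory, has a terminal object (its maximum), hence by Proposition \ref{prop_terminal_initial_object_implies_zero_homotopic_distance} any two functors from $C$ to $Q$ are homotopic, so $F_{\vert C}\simeq G_{\vert C}$ automatically. Together with the fact that every composable sequence of arrows in $P$ is a chain $x_0\leq x_1\leq \cdots \leq x_k$ which extends to a maximal chain, this shows that the collection of all maximal chains of $P$ is itself a geometric cover witnessing that $F_{\vert C}\simeq G_{\vert C}$ on each piece. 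Since $P$ is finite, this collection is finite, which yields finiteness of $\cD(F,G)$.

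To prove the ``enough to study'' assertion, I would start from an arbitrary geometric cover $\{V_0,\ldots,V_n\}$ of $P$ with $F_{\vert V_j}\simeq G_{\vert V_j}$ for each $j$, and refine it as follows. For every maximal chain $C$ of $P$, the geometric-cover condition applied to the composable sequence of arrows of $C$ produces an index $j$ with $C\subseteq V_j$; choose one such index and call it $j(C)$. Now set $U_j$ to be the union of all maximal chains $C$ with $j(C)=j$, regarded as the full subposet (subcategory) of $P$ on the union of their underlying sets of objects. Then $U_j\subseteq V_j$, so $F_{\vert U_j}\simeq G_{\vert U_j}$ is inherited from the cover we started with. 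Furthermore $\{U_0,\ldots,U_n\}$ is still a geometric cover of $P$: any composable sequence of arrows is a chain and hence extends to a maximal chain, which by construction lies entirely in some $U_j$. The cover $\{U_0,\ldots,U_n\}$ has the same cardinality as the original, so $\cD(F,G)$ is realized by a geometric cover of the prescribed form.

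The one point where care is needed is the interpretation of the notation ``$C_{i_0}\cup \cdots \cup C_{i_n}$''. Read on the level of objects and completed to the full subcategory on that vertex set, the construction above produces elements of exactly the required shape; closure under composition is automatic because in a poset there is at most one arrow between any two objects. This is the only place where the argument departs from bookkeeping, and it parallels the corresponding issue in Proposition \ref{prop:form_of_coverings_D}.
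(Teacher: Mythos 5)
Your argument is correct and is exactly the adaptation the paper intends: the paper gives no separate proof of this statement, saying only that it is ``similar'' to that of Proposition \ref{prop:form_of_coverings_D}, and your substitution of maximal chains (contractible, since a finite chain has a terminal object, so $F$ and $G$ are automatically homotopic on each) for basic open sets at maximal elements, together with the refinement of an arbitrary geometric cover obtained by assigning each maximal chain to a piece of the cover containing it, is precisely that similar proof. The one delicate point --- that the union of chains must be read as the full subposet on the union of their object sets so that it is closed under composition and sits inside the original piece of the cover --- is one you already identify and resolve.
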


As a consequence of the previous two results, we can given an upper bound for the categorical homotopic distance.

\begin{corollary}
	Given two finite posets $P$ and $Q$ and two functors $F,G\co P\to W$, then $\D(F,G)$ and $\cD(F,G)$ are less than or equal to the number of maximal elements of $P$. Furthermore, $\cD(F,G)$ is less than or equal to the number of minimal elements.
\end{corollary}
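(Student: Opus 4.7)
The plan is to exhibit explicit covers of $P$ indexed by its maximal or minimal elements and apply Proposition \ref{prop_terminal_initial_object_implies_zero_homotopic_distance}: on any subposet which has a terminal or initial object, every pair of functors is homotopic (and also weakly homotopic), so it contributes zero to the distance. Consequently, the remaining issue is purely combinatorial: producing a cover (open or geometric) of the right size.

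For the maximal-element bound, let $m_1,\dots,m_n$ be the maximal elements of $P$ and consider the principal downsets $U_{m_i}=\{x\in P:x\le m_i\}$. Since every $x\in P$ lies below some maximal element, $\{U_{m_1},\dots,U_{m_n}\}$ is an open cover of $P$ in the poset topology. Each $U_{m_i}$ has $m_i$ as terminal object, so by Proposition \ref{prop_terminal_initial_object_implies_zero_homotopic_distance} we have $F_{|U_{m_i}}\simeq G_{|U_{m_i}}$. This gives the bound for $\D(F,G)$ directly, and by Lemma \ref{lemma:open_is_geometric} the same collection is also a geometric cover, giving the bound for $\cD(F,G)$.

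For the minimal-element bound on $\cD$, let $m'_1,\dots,m'_k$ be the minimal elements of $P$ and let $F_{m'_j}=\{y\in P:y\ge m'_j\}$. These subposets are not open, so the previous argument does not transfer verbatim to $\D$, which is exactly why the corollary states this bound only for $\cD$. The key step is that $\{F_{m'_j}\}$ is nevertheless a geometric cover: given a chain of composable arrows $x_0\le x_1\le \cdots \le x_r$ in $P$, choose any minimal element $m'_j\le x_0$; then the entire chain lies in $F_{m'_j}$. Since $F_{m'_j}$ has $m'_j$ as initial object, Proposition \ref{prop_terminal_initial_object_implies_zero_homotopic_distance} again yields $F_{|F_{m'_j}}\simeq G_{|F_{m'_j}}$, completing the argument.

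The only genuine obstacle is checking that $\{F_{m'_j}\}$ satisfies the chain condition defining a geometric cover, and this is immediate from the finiteness of $P$ (every element sits above some minimal element). Note that in each case the cover has at most the claimed number of pieces, so strictly speaking the distance is at most (number of maximal/minimal elements) $-\,1$; the inequality as stated in the corollary then follows \emph{a fortiori} under the normalization convention used throughout the paper.
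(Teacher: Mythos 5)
Your proof is correct and follows essentially the same route the paper intends: the corollary is stated as a consequence of the two preceding propositions on the form of coverings, and the covers those propositions produce are exactly your principal downsets $U_{m_i}$ (contractible by Proposition \ref{prop_terminal_initial_object_implies_zero_homotopic_distance}, open, hence geometric by Lemma \ref{lemma:open_is_geometric}) and, for the minimal-element bound on $\cD$, the upsets $F_{m'_j}$ grouped from maximal chains, which you correctly verify form a geometric (though not open) cover. Your closing remark that the bounds are really off by one from the normalized count is also accurate.
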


\subsection{Relations with other homotopic distances}

In \cite{SamQuiJa_2,SamQuiJa_TC,SamQuiJa_1}, simplicial versions of LS category and topological complexity were given by one of the authors, by replacing the notion of homotopic continuous maps with that of contiguous simplicial maps. Recall that two simplicial maps $\varphi,\psi\colon K \to L$ are said to be contiguous if for every simplex $\sigma \in K$, $\varphi(\sigma) \cup \psi(\sigma)$ is a simplex of $L$. Two simplicial maps $\varphi,\psi\colon K \to L$ lie in the same contiguity class if there exists a sequence $\varphi=\varphi_1,\ldots, \varphi_n=\psi$ such that $\varphi_i$ and $\varphi_{i+1}$ are contiguous for every $0\leq i < n$. 

In the same vein, a notion of distance between simplicial maps  can be defined.

\begin{definition}[\cite{QuiDa}]
	The {\em contiguity distance} $\sD(\varphi,\psi)$  between two simplicial maps $\varphi,\psi\colon K \to L$  is the least integer $n\geq 0$ such that there exists a covering of $K$ by subcomplexes $K_0,\dots,K_n$ such that the  restrictions $\varphi_{\vert K_j},\psi_{\vert K_j}\colon K_j \to L$ are in the same contiguity class, for all $j=0,\dots,n$.  If there is no such covering, we define $\sD(f,g)=\infty$.
\end{definition}

This notion of contiguity distance generalizes those of simplicial LS category $\scat(K)$ and discrete topological complexity $\sTC(K)$ \cite{SamQuiJa_2,SamQuiJa_TC,SamQuiJa_1,QuiDa,Scoville}: 

\begin{example}
	Given two simplicial complexes $K$ and $L$, denote by $K \prod L$ their categorical product \cite{Kozlov}. The contiguity distance between the projections $p_1,p_2\colon K \prod K\to K$ equals $\sTC(K)$, as follows from \cite[Theorem 3.4]{SamQuiJa_TC}.
\end{example}

\begin{example}
The simplicial LS category of a simplicial map between simplicial complexes  $\varphi\colon K \to L$, denoted $\scat(\varphi)$ \cite{Scoville}, is  the contiguity distance $\sD(\varphi,v_0)$  where $v_0\colon K \to L$ is a constant simplicial map.
\end{example}

 
%

\begin{theorem}\label{theorem:relations_between_distances_context_of_posets}
	Given order preserving maps between finite posets $F,G\colon P\to Q$, then $$\D(\B F, \B G)\leq \wcD(F,G) \leq  \sD(\kappa(F),\kappa(G)) \leq \cD(F,G) \leq \D(F,G).$$
\end{theorem}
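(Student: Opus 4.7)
My plan is to establish the four inequalities one at a time, moving from the outside in. The leftmost inequality $\D(\B F,\B G)\leq \wcD(F,G)$ is already Proposition \ref{prop:inequalities_homotopic_distances}. For the rightmost, $\cD(F,G)\leq \D(F,G)$, I start from an open cover of $P$ realising $\D(F,G)\leq n$, promote it to a geometric cover by Lemma \ref{lemma:open_is_geometric}, and recall that for order-preserving maps between posets the two notions of homotopy agree (\cite{Raptis}); the same cover then witnesses $\cD(F,G)\leq n$. For $\sD(\kappa F,\kappa G)\leq \cD(F,G)$, given a geometric cover $\{\U_0,\dots,\U_n\}$ realising $\cD(F,G)\leq n$, I take $K_j=\kappa(\U_j)$: Proposition \ref{prop:condition_being_geometric_cover} guarantees that these subcomplexes cover $\kappa P$, and Lemma \ref{lema:contiguous_maps_in_posets_go_to_same_contiguity_class} converts functor homotopy on each $\U_j$ into the contiguity-class condition on $K_j$.

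The main step is $\wcD(F,G)\leq \sD(\kappa F,\kappa G)$. Starting from a cover $\{K_0,\dots,K_n\}$ of $\kappa P$ with $\kappa F_{\vert K_j}$ and $\kappa G_{\vert K_j}$ in the same contiguity class, I take $\U_j$ to be the full subposet of $P$ on the vertex set of $K_j$. Any composable chain of arrows in $P$ corresponds to a simplex of $\kappa P$, hence lies in some $K_j$ and thereby inside $\U_j$, so $\{\U_j\}$ is a geometric cover. To verify $\B F_{\vert \B\U_j}\simeq \B G_{\vert \B\U_j}$, I use that contiguity applied at each $0$-simplex $\{v\}\in K_j$ forces $F(v)$ and $G(v)$ to be comparable in $Q$, so the pointwise maximum $H_j(v)=\max\{F(v),G(v)\}$ is well-defined on all of $\U_j$. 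A short case analysis, splitting on which of $F(v),G(v)$ realises the maximum, shows $H_j\colon \U_j\to Q$ is order-preserving, and the pointwise dominations $F(v)\leq H_j(v)$ and $G(v)\leq H_j(v)$ give natural transformations $F_{\vert \U_j}\Rightarrow H_j$ and $G_{\vert \U_j}\Rightarrow H_j$, producing a zigzag that establishes $F_{\vert \U_j}\simeq G_{\vert \U_j}$ as functors; applying the classifying space functor delivers the desired homotopy on $\B\U_j$.

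The main obstacle is that the maximum construction works cleanly when $\kappa F_{\vert K_j}$ and $\kappa G_{\vert K_j}$ are directly contiguous, whereas the hypothesis only provides membership in a common contiguity class, i.e., connection by a zigzag of intermediate simplicial maps which in general need not arise from functors on $P$ and so need not be order-preserving on $\U_j$. I expect to address this either by propagating the argument step-by-step along the zigzag---constructing at each stage an order-preserving representative via iterated pointwise maxima with respect to previously chosen functors---or by first passing to a refinement of the cover so that consecutive simplicial maps are related by a single contiguity step, which keeps the construction modular and localises the technical work to this one place.
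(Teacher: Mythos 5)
Three of your four steps coincide with the paper's proof and are fine: the leftmost inequality is Proposition~\ref{prop:inequalities_homotopic_distances}, the rightmost follows from Lemma~\ref{lemma:open_is_geometric}, and the passage from a geometric cover to a cover by subcomplexes via Proposition~\ref{prop:condition_being_geometric_cover} and Lemma~\ref{lema:contiguous_maps_in_posets_go_to_same_contiguity_class} gives $\sD(\kappa(F),\kappa(G))\leq \cD(F,G)$ exactly as in the paper.

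The gap is in your treatment of $\wcD(F,G)\leq \sD(\kappa(F),\kappa(G))$, and it comes from aiming at too strong a conclusion. You try to upgrade the contiguity-class hypothesis on $K_j$ to a \emph{functorial} homotopy $F_{\vert \U_j}\simeq G_{\vert \U_j}$ and only then apply $\B$. If that worked it would prove $\cD(F,G)\leq \sD(\kappa(F),\kappa(G))$, which together with the reverse inequality would force $\sD=\cD$ identically; this is not expected to hold, and it is precisely why Lemma~\ref{lema:contiguous_maps_in_posets_go_to_same_contiguity_class} returns a homotopy only after applying $\chi$ (i.e.\ on the barycentric subdivision) and why Lemma~\ref{lem:subdivi_changes_ineq} and Proposition~\ref{prop:ccat_and_cat_stabilise_and_coincide} need subdivisions at all. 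Your pointwise-maximum construction is sound for a \emph{single} contiguity step (comparability of $F(v)$ and $G(v)$ does follow, the maximum is order-preserving, and in a thin category the pointwise inequalities are automatically natural), but you correctly identify that it breaks for a genuine contiguity chain, whose intermediate simplicial maps need not be order-preserving. Neither proposed repair closes this: refining the cover does not shorten the contiguity chain between two fixed maps on a given subcomplex, and ``iterated pointwise maxima'' against non-order-preserving intermediate maps does not produce functors. The paper's argument avoids all of this by observing that $\wcD$ only asks for a homotopy \emph{after geometric realization}: two simplicial maps in the same contiguity class have homotopic geometric realizations (\cite{Barmak_book}), so the cover realizing $\sD(\kappa(F),\kappa(G))$ already witnesses $\B F_{\vert \B\U_j}\simeq \B G_{\vert \B\U_j}$ directly, with no need to ever produce a natural transformation. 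Replacing your entire second and third paragraphs by this one-line appeal both fixes the gap and shortens the proof.
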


\begin{proof}
From Proposition \ref{prop:inequalities_homotopic_distances} we already have the inequalities: $$\D(\B F, \B G)\leq \wcD(F,G) \leq \cD(F,G).$$ 
First, we prove that: $$\cD(F,G) \leq D(F,G).$$ 
Suppose that $D(F,G)=n$ with an open covering $\{U_0,\dots,U_n\}$. Because of Lemma \ref{lemma:open_is_geometric} the collection $ \{U_0,\dots,U_n\}$ is also a geometric covering. 

Now we show that:
 $$\sD(\kappa(F),\kappa(G)) \leq \cD(F,G).$$
Recall that a collection $\{\U_{\lambda}\}_{\lambda \in \Lambda}$ of subcategories of a category $\C$, is a geometric cover if and only if the collection of subcomplexes $\{\B \U_{\lambda}\}_{\lambda \in \Lambda}$ covers $\B\C$ (Proposition \ref{prop:condition_being_geometric_cover}). 
 Now, the inequality $\sD(\kappa(F),\kappa(G)) \leq \cD(F,G)$ follows from Lemma \ref{lema:contiguous_maps_in_posets_go_to_same_contiguity_class}. Finally, the inequality $\wcD(F,G) \leq  \sD(\kappa(F),\kappa(G))$ follows from the fact that if two simplicial maps are in the same contiguity class, then their geometric realizations are homotopic \cite{Barmak_book}.
\end{proof}


As a consequence of Theorem \ref{theorem:relations_between_distances_context_of_posets} we obtain several results relating existing notions for LS categories:

\begin{corollary}
Given a poset $P$, it is 
$$\cat(\vert\kappa(P)\vert)\leq \wccat(P) \leq  \scat(\kappa(P)) \leq \ccat(P) \leq \cat(P),$$ where $\vert\kappa(P)\vert$ denotes the geometric realization of the simplicial complex $\kappa(P)$.
\end{corollary}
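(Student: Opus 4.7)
The plan is to obtain the chain of inequalities as a direct specialization of Theorem~\ref{theorem:relations_between_distances_context_of_posets}, applied to the pair $F = 1_P$ (the identity functor) and $G = c_0$ (a constant functor onto some fixed object $c_0 \in P$). The task then reduces to matching up each distance in that theorem with the corresponding LS-category invariant in the corollary.

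First I would record the definitional translations. By Proposition~\ref{INCLCAT} (and the definition of $\ccat(F)$ specialized to the identity), $\ccat(P) = \cD(1_P, c_0)$, and by the analogous extension of the weak variant, $\wccat(P) = \wcD(1_P, c_0)$. Viewing $P$ as a finite $T_0$-space, $\cat(P) = \D(1_P, c_0)$, using that an order-preserving map and a continuous map between finite spaces are the same thing.

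Next I would handle the simplicial term. The McCord functor $\kappa$ sends $1_P$ to $1_{\kappa(P)}$ and a constant functor to a constant simplicial map $v_0$, hence
\[
\sD(\kappa(1_P),\kappa(c_0)) = \sD(1_{\kappa(P)}, v_0) = \scat(\kappa(P))
\]
by the example recalled just before the theorem. For the classifying-space term I would verify the standard identification $\B P = |\kappa(P)|$: the non-degenerate $m$-simplices of the nerve $\mathrm{N} P$ are the composable tuples of non-identity arrows, which for a poset are exactly the strict chains $x_0 < x_1 < \dots < x_m$, i.e.\ the simplices of the order complex $\kappa(P)$. Since $\B$ is functorial, $\B(1_P) = 1_{|\kappa(P)|}$ and $\B(c_0)$ is a constant map, so
\[
\D(\B 1_P, \B c_0) = \cat(|\kappa(P)|).
\]

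With these identifications in hand, substituting $F = 1_P$ and $G = c_0$ into the chain
\[
\D(\B F, \B G)\leq \wcD(F,G) \leq  \sD(\kappa(F),\kappa(G)) \leq \cD(F,G) \leq \D(F,G)
\]
of Theorem~\ref{theorem:relations_between_distances_context_of_posets} yields the corollary directly. There is no real obstacle: the content is entirely in Theorem~\ref{theorem:relations_between_distances_context_of_posets}, and the only bookkeeping to double-check is the identification $\B P = |\kappa(P)|$ together with the naturality of $\B$ and $\kappa$ on identities and constants.
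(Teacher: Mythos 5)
Your proof is correct and is exactly the argument the paper intends: the corollary is stated as an immediate specialization of Theorem~\ref{theorem:relations_between_distances_context_of_posets} to $F=1_P$ and $G=c_0$, with the same identifications ($\ccat(P)=\cD(1_P,c_0)$, $\scat(\kappa(P))=\sD(1_{\kappa(P)},v_0)$, $\B P=\vert\kappa(P)\vert$, etc.) that you spell out. One trivial nitpick: in the paper's notation $\kappa$ is the order-complex functor and $\chi$ is the McCord functor, so calling $\kappa$ the ``McCord functor'' is a misnomer, though it does not affect the argument.
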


Therefore, Theorem \ref{theorem:relations_between_distances_context_of_posets} generalizes the results of Tanaka \cite{Tanaka3} regarding the categorical LS-category. 

\subsection{Subdivisions}

We recall that given a poset  $P$, its barycentric subdivision can be defined  as $\sd(P)=(\chi\circ \kappa) (P)$ \cite{Barmak_book}. Moreover, this construction is functorial. We have seen that $\cD(F,G)\leq \D(F,G)$ (Theorem \ref{theorem:relations_between_distances_context_of_posets}). By subdividing the domain we can reverse this inequality.

\begin{lema}\label{lem:subdivi_changes_ineq}
		Given two order preserving maps $F,G\colon P\to Q$ between finite posets , it is 
		$$\D(\sd(F),\sd(G))\leq \cD(F,G).$$
\end{lema}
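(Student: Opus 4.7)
The plan is to promote the geometric cover witnessing $\cD(F,G)\le n$ to an open cover of $\sd(P)$ witnessing $\D(\sd(F),\sd(G))\le n$. Suppose $\{\U_0,\ldots,\U_n\}$ is a geometric cover of $P$ with $F_{\vert\U_j}\simeq G_{\vert\U_j}$ for every $j$, and for each $j$ set
\[
V_j=\sd(\U_j)=\{\sigma\in\sd(P)\co \sigma \text{ is a chain lying in } \U_j\}\subseteq\sd(P).
\]

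First I would check that $\{V_0,\ldots,V_n\}$ is an open cover of $\sd(P)$. Openness reduces to the down-set condition in $\sd(P)$: if $\tau\subseteq\sigma$ in $\sd(P)$ and $\sigma\in V_j$, then $\tau\subseteq\ob(\U_j)$ and every arrow of $\tau$ is a composite of arrows of $\sigma$, hence lies in $\U_j$ because $\U_j$ is closed under composition. To see that $\{V_j\}$ covers $\sd(P)$, take any simplex $\sigma=(x_0<x_1<\cdots<x_k)$ of $\sd(P)$: it corresponds to a composable sequence of arrows $x_0\to x_1\to\cdots\to x_k$ in $P$, and the geometric cover property yields some $\U_j$ containing all of them, so $\sigma\in V_j$.

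Next I would establish $\sd(F)_{\vert V_j}\simeq\sd(G)_{\vert V_j}$ for every $j$, which combined with the previous paragraph gives $\D(\sd(F),\sd(G))\le n$ at once. By functoriality of $\sd=\chi\circ\kappa$, restriction commutes with subdivision, so $\sd(F)_{\vert \sd(\U_j)}=\sd(F_{\vert\U_j})$, and similarly for $G$. Starting from $F_{\vert\U_j}\simeq G_{\vert\U_j}$, Lemma~\ref{lema:contiguous_maps_in_posets_go_to_same_contiguity_class} places $\kappa(F_{\vert\U_j})$ and $\kappa(G_{\vert\U_j})$ in a common contiguity class, and applying the converse direction of the same lemma yields $\chi(\kappa(F_{\vert\U_j}))\simeq\chi(\kappa(G_{\vert\U_j}))$, i.e.\ $\sd(F_{\vert\U_j})\simeq\sd(G_{\vert\U_j})$. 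The delicate point of the argument is precisely this round trip — categorical homotopy on a subcategory of $P$, contiguity of the induced simplicial maps, and back to a categorical homotopy on $\sd(\U_j)$ — for which both directions of Lemma~\ref{lema:contiguous_maps_in_posets_go_to_same_contiguity_class} are essential; the remaining combinatorics of chains and down-sets is then routine.
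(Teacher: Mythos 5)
Your proposal is correct and follows essentially the same route as the paper's proof: subdivide the geometric cover to get the open cover $\{\sd(\U_j)\}$ of $\sd(P)$ and pass the homotopies $F_{\vert\U_j}\simeq G_{\vert\U_j}$ through both directions of Lemma~\ref{lema:contiguous_maps_in_posets_go_to_same_contiguity_class}. You spell out two points the paper leaves implicit (why $\sd(\U_j)$ is open, and why the geometric-cover property makes $\{\sd(\U_j)\}$ cover $\sd(P)$), which is a welcome addition but not a different argument.
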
 

\begin{proof}
	Assume that $\cD(F,G)=n$ with the geometric cover $\{\U_i\}$. Note that $\kappa(U_i)$ is a subcomplex of $\kappa(P)$ and therefore $\sd(U_i)=\chi\circ \kappa(U_i)$ is an open subset of $\sd(P)$. Moreover, $\{\sd(\U_i)\}$ is a cover of $\sd(P)$. Finally, since $F_{\vert \U_i}\simeq G_{\vert \U_i}$, from Lemma \ref{lema:contiguous_maps_in_posets_go_to_same_contiguity_class}  follows that $F_{\vert \sd(\U_i)}\simeq G_{\vert \sd(\U_i)}$. As a consequence, $\D(\sd(F),\sd(G))\leq \cD(F,G)$.
\end{proof}

Moreover, the inequality becomes an equality after enough subdivisions:

\begin{prop}\label{prop:ccat_and_cat_stabilise_and_coincide}
	Given two order preserving maps $F,G\colon P\to Q$ between finite posets,   there exists a natural number $k$ such that the $k$-iterated barycentric subdivision stabilizes the distances, that is,  
	$$\D(\sd^k(F),\sd^k(G))= \cD(\sd^k(F),\sd^k(G)).$$
\end{prop}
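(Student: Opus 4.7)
The plan is to exhibit a decreasing integer sequence obtained by alternating the two available inequalities and argue by stabilization. Set $a_k=\D(\sd^k(F),\sd^k(G))$ and $b_k=\cD(\sd^k(F),\sd^k(G))$ for $k\geq 0$. Both quantities are finite: since $\sd^k(P)$ is a finite poset, the corollary bounding the distances by the number of maximal (resp.\ minimal) elements of the domain applies to each iterate.

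From Theorem~\ref{theorem:relations_between_distances_context_of_posets} applied to the order-preserving maps $\sd^k(F),\sd^k(G)\colon \sd^k(P)\to \sd^k(Q)$ we obtain
\[
b_k=\cD(\sd^k(F),\sd^k(G))\leq \D(\sd^k(F),\sd^k(G))=a_k.
\]
From Lemma~\ref{lem:subdivi_changes_ineq} applied to the same pair of maps we obtain
\[
a_{k+1}=\D(\sd(\sd^k(F)),\sd(\sd^k(G)))\leq \cD(\sd^k(F),\sd^k(G))=b_k.
\]
Chaining these inequalities yields the non-increasing sequence
\[
a_0\geq b_0\geq a_1\geq b_1\geq a_2\geq b_2\geq\cdots
\]
of non-negative integers.

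Any such sequence eventually becomes constant: there is an index $k_0$ and a value $c\geq 0$ such that $a_k=b_k=c$ for every $k\geq k_0$. Indeed, once $a_{k_0}=a_{k_0+1}=c$, the squeeze $a_{k_0+1}\leq b_{k_0}\leq a_{k_0}$ forces $b_{k_0}=c$, and the same reasoning propagates to all subsequent indices. In particular, for any such $k\geq k_0$ we have $\D(\sd^k(F),\sd^k(G))=\cD(\sd^k(F),\sd^k(G))$, proving the proposition.

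The argument is essentially a monotone convergence in $\mathbb{N}$; there is no real obstacle once Theorem~\ref{theorem:relations_between_distances_context_of_posets} and Lemma~\ref{lem:subdivi_changes_ineq} are in place. The only subtlety worth checking is the finiteness of every $a_k$, which requires both that $\sd$ preserves finiteness of posets and that the upper bound by the number of maximal elements is invoked for each iterate; otherwise the ``decreasing sequence of non-negative integers'' argument would not apply.
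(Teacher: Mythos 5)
Your proof is correct and follows essentially the same route as the paper: both interleave the inequality $\cD\leq\D$ from Theorem~\ref{theorem:relations_between_distances_context_of_posets} with Lemma~\ref{lem:subdivi_changes_ineq} to get a non-increasing chain under iterated subdivision and conclude by stabilization of integers. Your write-up is in fact slightly more careful than the paper's (which just asserts the two limits agree), since you make the finiteness and the squeeze explicit.
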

 
\begin{proof}
	From Theorem \ref{theorem:relations_between_distances_context_of_posets} and Lemma \ref{lem:subdivi_changes_ineq} it follows that: $$\cD(\sd(F),\sd(G))\leq \D(\sd(F),\sd(G))\leq \cD(F,G).$$ Therefore,  $$
	\lim\limits_{k\to \infty} \D(\sd^{k} (F),\sd^k (G))=\lim\limits_{k\to \infty} \cD(\sd^k(F),\sd^k(G)). \qedhere
	 $$
\end{proof}

Observe that both Lemma \ref{lem:subdivi_changes_ineq} and Proposition \ref{prop:ccat_and_cat_stabilise_and_coincide} generalize the corresponding results in the context of posets by Tanaka for the categorical LS-category \cite{Tanaka3}.

\begin{remark}
	Section \ref{sec:Posets} could be generalized both to the context of preordered sets and to acyclic categories and most results would hold. 
\end{remark}

\bibliographystyle{plain}
\bibliography{biblio5}

\end{document}